\DeclareFontFamily{U}{wncy}{}
\DeclareFontShape{U}{wncy}{m}{n}{<->wncyr10}{}
\DeclareSymbolFont{mcy}{U}{wncy}{m}{n}
\DeclareMathSymbol{\Sh}{\mathord}{mcy}{"58}
\newcommand{\Sha}{\Sh}
\newtheorem{theorem}{Theorem}[section]
\newtheorem*{theorem*}{Theorem}
\newtheorem{lemma}[theorem]{Lemma}
\newtheorem{corollary}[theorem]{Corollary}
\newtheorem{proposition}[theorem]{Proposition}
\newtheorem*{conjecture*}{Conjecture}
\crefname{theorem}{Theorem}{Theorems}
\crefname{lemma}{Lemma}{Lemmas}
\crefname{corollary}{Corollary}{Corrolaries}
\crefname{proposition}{Proposition}{Propositions}
\crefname{question}{Question}{questions}
\crefname{conjecture}{Conjecture}{Conjectures}
\providecommand{\customgenericname}{}
\newcommand{\newcustomtheorem}[2]{%
  \newenvironment{#1}[1]
  {%
   \renewcommand\customgenericname{#2}%
   \renewcommand\theinnercustomgeneric{\kern-0.3em ##1}%
   \innercustomgeneric
  }
  {\endinnercustomgeneric}
}
\crefname{specialtheorem}{}{}
\theoremstyle{definition}
\newtheorem{definition}[theorem]{Definition}
\newtheorem{assumption}[theorem]{Assumption}
\crefname{definition}{Definition}{Definitions}
\crefname{hypothesis}{Hypothesis}{Hypothesis}
\crefname{assumption}{Assumption}{Assumptions}
\theoremstyle{remark}
\newtheorem{remark}[theorem]{Remark}
\crefname{remark}{Remark}{Remarks}
\crefname{appsec}{Appendix}{Appendices}
\newcommand\xtwoheadrightarrow[2][]{%
  \mathrel{\ooalign{$\xrightarrow[#1\mkern4mu]{#2\mkern4mu}$\cr%
  \hidewidth$\rightarrow\mkern4mu$}}
}
\newcommand{\abs}[1]{\##1}
\newcommand{\Gal}[2]{\mathrm{Gal}\left(#1\slash #2\right)}
\newcommand{\Z}{\mathbb{Z}}
\newcommand{\Q}{\mathbb{Q}}
\newcommand{\F}{\mathbb{F}}
\newcommand{\p}{\mathfrak{p}}
\renewcommand{\P}{\mathfrak{P}}
\renewcommand{\O}{\mathcal{O}}
\renewcommand{\L}{\mathscr{L}}
\newcommand{\T}{\mathbb{T}}
\newcommand{\A}{\mathbb{A}}
\newcommand{\rightiso}{\xrightarrow{\sim}}
\newcommand{\iso}{\simeq}
\newcommand{\conj}[1]{\overline{#1}}
\renewcommand{\index}[2]{\left[#1:#2\right]}
\newcommand{\Cohomology}[4]{\mathrm{H}^{#1}_{{#2}}\left(#3,#4\right)}
\newcommand{\loc}[1]{\mathrm{loc}_{#1}}
\renewcommand{\log}{\mathrm{log}}
\newcommand{\corank}{\mathrm{corank}}
\renewcommand{\div}{\mathrm{div}}
\newcommand{\tor}{\mathrm{tor}}
\renewcommand{\ker}[1]{\mathrm{ker}\left(#1\right)}
\DeclareMathOperator{\Ker}{ker}
\renewcommand{\Im}[1]{\mathrm{im}\left(#1\right)}
\DeclareMathOperator{\Coker}{coker}
\newcommand{\defeq}{\vcentcolon=}
\newcommand{\ac}{\mathrm{ac}}
\newcommand{\Sel}{\mathrm{Sel}}
\newcommand{\BK}{\mathscr{L}_{\mathrm{BK}}}
\newcommand{\Fac}{\mathscr{L}_{\mathrm{ac}}}
\newcommand{\ilim}[1]{\varprojlim\limits_{#1}}
\renewcommand{\f}{\mathrm{f}}
\renewcommand{\t}{\mathrm{t}}
\newcommand{\ur}{\mathrm{ur}}
\newcommand{\Admissible}{\mathrm{Adm}}
\newcommand{\Kolyvagin}{\mathrm{Kol}}
\newcommand{\trchar}{\mathds{1}}
\newcommand{\Fil}{\mathrm{Fil}}
\newcommand{\dcris}[1]{\mathbb{D}_{\mathrm{cris}}(#1)}
\newcommand{\numbersection}{\numberwithin{theorem}{section}\numberwithin{equation}{section}\renewcommand{\theequation}{\thesection.\alph{equation}}}
\let\oldequation\equation
\let\endoldequation\endequation
\newcommand{\tmpArg}{}
\renewenvironment{equation}[1][]
{%
    \ifstrempty{#1}{%
        \renewcommand{\tmpArg}{\endnamedequation}%
        \namedequation%
    }{%
        \renewcommand{\tmpArg}{\endoldequation}%
        \oldequation\label{#1}%
    }%
}%
{%
\tmpArg\ignorespacesafterend%
}
\patchcmd{\section}{\normalfont}{\normalfont\LARGE}{}{}
\begin{document}

\numbersection

\title{A proof of Kolyvagin's Conjecture via the BDP main conjecture}
\author{Murilo Zanarella}

\begin{abstract}
We adapt Wei Zhang's proof of Kolyvagin's conjecture for modular abelian varieties over $\mathbb{Q}$ to rely on the BDP main conjecture instead of on the cyclotomic main conjecture. The main ingredient is a reduction to a case that is tractable by the BDP main conjecture, in a similar spirit to Zhang's reduction to the rank one case. By using the BDP main conjecture instead of the cyclotomic main conjecture, our approach is more suitable than Zhang's to extend to modular abelian varieties over totally real fields.
\end{abstract}

\date{September 18, 2019}

\maketitle

\setcounter{tocdepth}{1}
{\onehalfspacing
\tableofcontents}

\section{Introduction}
\subsection{Main result}
A lot of the following notation follows \cite{Wei-Zhang}.

Fix once and for all a prime $p$ with
\begin{equation}[p-big]\tag{$p$-big}
    p\ge5
\end{equation}
and a quadratic imaginary field $K$ of discriminant $D<-4$ such that
\begin{equation}[split]\tag{split}
   p\text{ splits in }K. 
\end{equation}

For a newform $g\in S_2(\Gamma_0(N))$ of weight $2,$ level $N$ and trivial Nebentypus, we denote its field of coefficients by $F=F_g,$ with ring of integers $\O=\O_g.$ Denote by $\p$ a place of $F$ above $p,$ and by $\O_0$ the order of $\O$ generated by the Fourier coefficients of $g.$ Let $\p_0=\p\cap\O_0$ and let
\begin{equation}
    k=\O/\p,\quad k_0=\O_0/\p_0.
\end{equation}
Let $A=A_g$ be its associated $\mathrm{GL}_2$-type abelian variety over $\Q,$ where we choose an isomorphism class with an embedding $\O\hookrightarrow \mathrm{End}_\Q(A).$ Denoting by $\O_\p$ the ring of integers of $F_\p,$ we have a Galois representation
\begin{equation}
    \rho_{A,\p}\colon G_\Q\to\mathrm{GL}_{\O_\p}(T_\p(A))
\end{equation}
on the Tate module $T_\p(A)=\ilim{} A[\p^i],$ which is a free $\O_\p$-module of rank $2.$ As explained in \cite{Carayol}, this representation is defined over the smaller subring $\O_{0,\p_0}\subseteq \O_\p$:
\begin{equation}
    \rho_{A,\p_0}\colon G_\Q\to\mathrm{GL}_2(\O_{0,\p_0})\subseteq \mathrm{GL}_2(\O_\p)
\end{equation}
such that
\begin{equation}[defined-smaller]
    \rho_{A,\p}=\rho_{A,\p_0}\otimes_{\O_{0,\p_0}}\O_\p.
\end{equation}

We consider the reduction of $\rho_{A,\p}$:
\begin{equation}
    \conj{\rho}_{A,\p}\colon G_\Q\to\mathrm{GL}_2(V_k)
\end{equation}
where $V_k=A[\p]$ is a two-dimensional $k$-vector space. Because of \eqref{defined-smaller}, there is a two dimensional $k_0$-vector space $V_g$ such that $V_k=V_g\otimes_{k_0}k$ as Galois modules.

Write $N=N^+N^-$ such that primes $l\mid N^+$ are split or ramified in $K$ and primes $l\mid N^-$ are inert in $K.$ We consider the following assumption on $N.$
\begin{equation}[Heegner]\tag{Heegner}
N^-\text{ is square-free with an even number of prime factors and }\mathrm{gcd}(N,D)=1.
\end{equation}

We also consider the following assumption on the pair $(g,\p)$:
\begin{assumption}\label{ass}
Assume that $g\in S_2(\Gamma_0(N))$ is such that
\begin{equation}[sqf]\tag{$\square$-free}
N \text{ is square-free}
\end{equation}
and
\begin{equation}[good]\tag{good}
p\nmid N.
\end{equation}
We also assume that
\begin{equation}\tag{res-surj}\label{res-surj}
\text{the residual representation }\conj{\rho}_{A,\p_0}\colon G_\Q\to\mathrm{GL}(V_g)\text{ is surjective},
\end{equation}
that
\begin{equation}[ram]\tag{ram}
\mathrm{Ram}(\conj{\rho}_{A,\p})\text{ contains all prime factors }l\parallel N^+\text{ and all }q\mid N^-\text{ such that }q\equiv \pm 1\mod p,
\end{equation}
where $\mathrm{Ram}(\conj{\rho}_{A,\p})$ is the set of places ramified in $\conj{\rho}_{A,\p},$ and that
\begin{equation}[not-anom]\tag{not anom}
    \p\nmid p+1-a_p.
\end{equation}
\end{assumption}

\begin{remark}
    We note that if $g$ satisfies \eqref{res-surj}, then $A$ is defined uniquely up to prime-to-$\p$ isogeny, and hence $\conj{\rho}_{A,\p_0}$ depends only on $g.$ We then denote $\conj{\rho}_{A,\p_0}=\conj{\rho}_{g,\p_0}$. In this case, we may also take $A_g$ to be $(\O,\p)$-optimal in the sense of \cite[Section 3.7]{Wei-Zhang}, and we do so. We also note that \eqref{ram} is equivalent to \cite[Hypothesis $\heartsuit$]{Wei-Zhang} when \eqref{sqf} holds.
\end{remark}
\begin{remark}
We note that \eqref{split}, \eqref{good} and \eqref{not-anom} imply that we have\footnote{Indeed, if $A^1(\Q_p)\subseteq A(\Q_p)$ denotes the kernel of reduction modulo $p,$ we have $0\to A^1(\Q_p)\to A(\Q_p)\to A(\F_p)\to 0$ by \eqref{good}. Applying $\otimes_{\O}\O_\p,$ and using that $A(\F_p)\otimes_\O\O_\p=A[\p^\infty](\F_p)\iso\O_\p/(1-a_p+p)$ is zero by \eqref{not-anom}, we conclude that $A^1(\Q_p)\otimes_\O\O_\p\iso A(\Q_p)\otimes_\O\O_\p.$ But $A^1(\Q_p)\otimes_{\O\otimes\Z_p}\O_\p$ is free of rank $1$ over $\O_\p,$ and this implies that $A[\p^\infty](K_w)=0$ since $K_w\iso\Q_p$ by \eqref{split}.}
\begin{equation}[no-local-tor]\tag{no local tor}
    \Cohomology{0}{}{K_w}{A[\p^\infty]}=0\text{ for }w\mid p.
\end{equation}
\end{remark}

A \emph{Kolyvagin prime} for $g$ is a prime $l\nmid NDp$ that is inert in $K$ and satisfy
\begin{equation}
    p\mid l+1,\text{ and } \p\mid a_l.
\end{equation}
Let $\Kolyvagin_g$ denote the set of square-free products of Kolyvagin primes for $g.$

When $(g,\p)$ satisfy \eqref{Heegner}, \eqref{good} and \eqref{res-surj}, we consider the collection of cohomology classes
\begin{equation}
    \kappa_g=\{c_g(n)\in\Cohomology{1}{}{K}{V_g}\colon n\in\Kolyvagin_g\}
\end{equation}
constructed in \cite[Section 3.7]{Wei-Zhang}, which are the $\mathrm{mod}\ \p$ classes of a Kolyvagin system.

\begin{specialtheorem}{Theorem A}\label{Theorem-A}
    Let $(g,\p)$ be a pair satisfying \cref{ass} and \eqref{Heegner}. Assume that the BDP main conjecture \cite[Conjecture 6.1.2]{JSW} is true for all pairs $(g',\p')$ satisfying \cref{ass}, \eqref{Heegner}, such that $\overline{\rho}_{g,\p_0}\iso\overline{\rho}_{g',\p'_0}$ and such that $N'^->1.$ Then we have $\kappa_g\neq\{0\}.$
\end{specialtheorem}

\begin{remark}
    In the proof of the theorem, we will use the BDP main conjecture for a single pair $(g',\p')$ of level $N'=Nm,$ for a certain product $m>1$ of primes that are inert in $K.$ In fact, we have some choice over $m$: for instance, we may choose it to have an arbitrarily large number of prime factors, while also avoiding any set of primes of density $0.$
\end{remark}

\begin{remark}
Although the current results on the BDP main conjecture do not allow us to obtain new results towards Kolyvagin's conjecture as a corollary\footnote{Although Wei Zhang works in the ordinary case, one may replace the cyclotomic main conjecture of \cite{Skinner-Urban} by the one in \cite{CCSS} to extend Zhang's proof to the supersingular case as well.}, our method is more suitable than Zhang's to extend to modular abelian varieties over totally real fields: the proofs of the cyclotomic main conjectures, for instance in \cite{Skinner-Urban} and \cite{CCSS}, rely on Kato and Beilinson--Flach elements, which we don't have analogues of in the totally real case. On the other hand, the proofs of the BDP main conjecture rely on Heegner points, which are available in the totally real case. We also note that a large part of the methods in \cite{Wei-Zhang} have already been extended to the totally real case in \cite{Haining}.
\end{remark}

\subsection{Proof outline and organization of the paper}
Our proof follows \cite{Wei-Zhang} very closely. There, Zhang performs an induction on the dimension of the $\p$-Selmer group, using the level raising results of \cite{DT2, DT}. He reduces the problem to the cases of dimension $0$ and $1,$ and then uses the results on the cyclotomic main conjecture of \cite{Skinner-Urban} to show that: (i) the dimension $0$ case cannot occur and (ii) the class $c_g(1)$ is nonzero in the dimension $1$ case.

As Zhang already noticed, we can rule out the dimension $0$ case by using the results on the parity conjecture in \cite{Nekovar2}. In the setting we are considering, we may also give a simple proof of the parity conjecture by relying on Howard's formalism of Kolyvagin systems. This is done in \Cref{parity-section}.

The novelty of our paper is how we deal with the dimension $1$ case. We first perform a level raising argument to reduce the problem further to the case where the BDP Selmer group\footnote{The BDP Selmer group is defined when \eqref{split} holds: it has the usual Bloch--Kato local condition for places $w\nmid p,$ the strict condition at one prime above $p,$ and the relaxed condition at the other prime above $p.$} is trivial. Such reduction relies on an extension of the parity lemma of Gross--Parson \cite[Lemma 9]{Gross-Parson}, which we establish in \Cref{cohomology-section}. In the case of dimension $1,$ the logarithm of the Heegner point $P\in A(K)$ can be related to the size of the BDP Selmer group, and the triviality of the latter will imply the $p$-indivisibility of $P.$ Since $c_g(1)$ is the image of $P$ under the Kummer map, the $p$-indivisibility of $P$ amounts to $c_g(1)\neq0.$ Such relation arises from specializing the BDP main conjecture at the trivial character: using the BDP formula of \cite{Brooks} on the analytic side and the anticyclotomic control theorem of \cite{JSW} on the algebraic side. We carry out such argument and conclude the proof of \cref{Theorem-A} in \Cref{proof-section}.

\subsection{A note on the hypothesis}
The hypothesis \eqref{sqf} is only needed in order to use the BDP formula of \cite{Brooks} (see \cite[Proposition 5.1.7]{JSW}). As mentioned in \cite[Section 7.4.4]{JSW}, this can likely be dropped.

The condition \eqref{no-local-tor} seems to be essential to our arguments: it plays an important role in the reduction to the case of trivial BDP Selmer group: it is necessary, for instance, for \eqref{f-is-one-dim} to be true. Moreover, \eqref{not-anom} and \eqref{no-local-tor} are also used to deduce $\kappa_g\neq\{0\}$ from the formula \eqref{bdp-form} obtained from the anticyclotomic control theorem. 

\subsection*{Acknowledgments}
I want to thank Francesc Castella for advising me throughout this project, and for all the encouragement and advice. I am also grateful for him for noting that the parity conjecture in the non-ordinary case could be deduced from the results in \cite{CCSS}. I would also like to thank Daniel Kriz for his willingness to answer many of the questions I had when preparing this paper.

\section{Parity conjecture}\label{parity-section}
In this section, let $(g,\p)$ satisfy \eqref{Heegner}, \eqref{good} and \eqref{res-surj}. Our goal is to prove that
\begin{equation}
    \corank_{\O_\p}\Sel_{\p^\infty}(A/K)\equiv1\mod2,
\end{equation}
where $\Sel_{\p^\infty}(A/K)$ denotes the usual $\p$-adic Selmer group.

As mentioned in the introduction, this is already covered by the result \cite[Theorem B]{Nekovar2}. However, we will give a simple proof of the parity conjecture in our setting by essentially following \cite{Nekovar}: combining a Kolyvagin system argument with an anticyclotomic control theorem.

In the ordinary case, the necessary ingredients are essentially already in \cite{Howard2}. For the non-ordinary case, the control theorm will be a simple consequence of the work of \cite{CCSS} on $\flat/\sharp$-Selmer groups.

For this section, $T=T_\p A$ denotes the $\p$-adic Tate module of $A.$ Let $\Gamma\defeq\Gal{K_\infty^\ac}{K}$ be the anticyclotomic Galois group, with a topological generator $\gamma.$ Let $\Lambda\defeq\O_\p\llbracket\Gamma\rrbracket$ be the Iwasawa algebra with Galois action given by $\Psi\colon G_K\to\Gamma,$ and denote $\Lambda_\infty\defeq\hat{\O}_{F_\infty}\llbracket\Gamma\rrbracket$ where $F_\infty$ is the unramified $\Z_p$-extension of $\Q_p.$ Denote $\T=T\otimes_{\O_\p}\Lambda$ and $\A=T\otimes_{\O_\p}\Lambda^\vee$ with diagonal Galois action, where $G_K$ acts on $\Lambda$ by the natural projection $\Psi\colon G_k\to \Gamma,$ and acts on $\Lambda^\vee$ by $\Psi^{-1}.$

We recall some objects from \cite{CCSS} in the non-ordinary case. We have Coleman maps $\mathrm{Col}^\bullet_v\colon\Cohomology{1}{}{K_v}{\T}\to\Lambda_\infty$ for $\bullet\in\{\flat,\sharp\}$ and $v\mid p,$ as in \cite[Section 4.2]{CCSS}, obtained by restricting the two-variable Coleman maps, first defined in \cite{Buyukboduk-Lei}, to the anticyclotomic line. We denote by $\Cohomology{1}{\bullet}{K_v}{\T}$ the kernel of such map, and define $\Cohomology{1}{\bullet}{K_v}{\A}$ to be its orthogonal complement under local duality. Finally, we denote by $\mathfrak{Sel}^{\bullet,\bullet}(K,\A)$ the Selmer group with the $\bullet$ conditions for $v\mid p,$ and the unramified condition outside of $p.$

\begin{theorem}\label{flat-sharp-control}
    Let $\bullet\in\{\flat,\sharp\}.$ Then the map $\Sel_{\p^\infty}(A/K)\to\mathfrak{Sel}^{\bullet,\bullet}(K,\A)^{\Gamma}$ has finite kernel and cokernel.
\end{theorem}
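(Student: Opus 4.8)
The plan is to prove this control theorem by comparing the anticyclotomic Selmer group over $K$ with the $\Gamma$-invariants of the big Selmer group $\mathfrak{Sel}^{\bullet,\bullet}(K,\A)$ term by term in the defining local conditions. Recall that $\A = T\otimes_{\O_\p}\Lambda^\vee$, so by Shapiro's lemma and the fact that taking $\Gamma$-invariants of $\Lambda^\vee$ recovers the trivial character, there is a natural map $\Sel_{\p^\infty}(A/K)\to\mathfrak{Sel}^{\bullet,\bullet}(K,\A)^\Gamma$ arising from the inclusion $A[\p^\infty]=T\otimes_{\O_\p}(\O_\p/\O_\p)\hookrightarrow\A$. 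First I would set up the standard Selmer diagram: a commutative diagram with exact rows, where the top row computes $\Sel_{\p^\infty}(A/K)$ as the kernel of $\Cohomology{1}{}{K}{A[\p^\infty]}\to\prod_v\Cohomology{1}{}{K_v}{A[\p^\infty]}/(\text{local conditions})$, and the bottom row computes $\mathfrak{Sel}^{\bullet,\bullet}(K,\A)^\Gamma$ similarly. The snake lemma then reduces the finiteness of the kernel and cokernel of the middle vertical map to controlling (a) the kernel and cokernel of $\Cohomology{1}{}{K}{A[\p^\infty]}\to\Cohomology{1}{}{K}{\A}^\Gamma$ globally, and (b) the corresponding local discrepancies between the local conditions at each place $v$.

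For the global cohomology comparison, I would use the inflation-restriction sequence for $1\to\Gamma\to\Gal{K_\infty^\ac}{K}/\,?\to$ — more precisely, since $\A^\Gamma=A[\p^\infty]$ (using that $(\Lambda^\vee)^\Gamma=\O_\p/\O_\p$ under the augmentation, or rather $\A^\Gamma\cong A[\p^\infty]$ up to the issue of whether $\gamma-1$ acts with finite cokernel), the long exact sequence in $\Gamma$-cohomology gives $0\to H^0(\Gamma,\Cohomology{1}{}{K_\infty^\ac}{\A})\to\Cohomology{1}{}{K}{\A}^\Gamma$ wait — I would instead directly invoke that $\Cohomology{i}{}{K}{\A}=\Cohomology{i}{}{K_\infty^\ac}{A[\p^\infty]}$ by Shapiro, and then the Hochschild–Serre spectral sequence for $K_\infty^\ac/K$ with group $\Gamma\cong\Z_p$ shows that $\Cohomology{1}{}{K}{A[\p^\infty]}\to\Cohomology{1}{}{K}{\A}^\Gamma$ has kernel $\Cohomology{1}{}{\Gamma}{\Cohomology{0}{}{K_\infty^\ac}{A[\p^\infty]}}$ and cokernel landing in $\Cohomology{2}{}{\Gamma}{\Cohomology{0}{}{K_\infty^\ac}{A[\p^\infty]}}=0$ since $\mathrm{cd}(\Gamma)=1$; the kernel is finite because $\Cohomology{0}{}{K_\infty^\ac}{A[\p^\infty]}$ is finite (as $\conj\rho_{A,\p}$ is irreducible by \eqref{res-surj}, the module $A[\p^\infty]^{G_{K_\infty^\ac}}$ is cofinitely generated and $\Gamma$-cohomology of it is finite, or it vanishes outright). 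The same argument handles the local terms at $v\nmid p$: there the local condition on both sides is the Bloch–Kato/unramified one, and $\Cohomology{1}{}{K_v}{A[\p^\infty]}\to\Cohomology{1}{}{K_v^{\mathrm{unr-of-}K_\infty^\ac}}{A[\p^\infty]}^\Gamma$ has finite kernel/cokernel by the same $\Z_p$-cohomology bound, with the finitely many $v$ contributing only finite error.

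The crux is the places $v\mid p$, where I must show that the map from the classical local condition $\Cohomology{1}{\f}{K_v}{A[\p^\infty]}$ (implicit in $\Sel_{\p^\infty}$) to $\left(\Cohomology{1}{\bullet}{K_v}{\A}\right)^\Gamma$ has finite kernel and cokernel. Here I would use the explicit description via the Coleman maps: by construction $\Cohomology{1}{\bullet}{K_v}{\A}$ is the orthogonal complement of $\Ker(\mathrm{Col}^\bullet_v)$, and the content of \cite[Section 4.2]{CCSS} (following \cite{Buyukboduk-Lei}) is precisely that these interpolate the classical local conditions — concretely, the specialization of $\mathrm{Col}^\bullet_v$ at the trivial character recovers (up to finite error, controlled by the interpolation factor, which is where \eqref{not-anom}/\eqref{no-local-tor} guarantee no extra vanishing) the Bloch–Kato dual exponential or the relevant $\f$-condition. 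So the $v\mid p$ comparison follows from the interpolation property of the Coleman maps together with \eqref{no-local-tor}, which ensures $\Cohomology{0}{}{K_v}{A[\p^\infty]}=0$ and hence that the local Tate twist/corestriction maps have no kernel. I expect this $v\mid p$ step — reconciling the $\flat/\sharp$ local conditions with the naive local condition defining $\Sel_{\p^\infty}$ — to be the main obstacle, and I would lean on \cite[Section 4.2]{CCSS} and the control-theoretic formalism there rather than reprove the interpolation from scratch; the remaining finiteness assertions are all instances of the elementary fact that $\Z_p$-Iwasawa cohomology of a cofinitely generated module has finite $\Gamma$-invariants and coinvariants.
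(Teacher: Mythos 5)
Your proposal is correct and follows essentially the same route as the paper: the standard control diagram with Hochschild--Serre handling the global and $v\nmid p$ terms, and the interpolation property of the Coleman maps (evaluation at $\trchar$ recovering the Bloch--Kato condition) handling $v\mid p$. The only point you leave implicit is how the finiteness of the cokernel at $v\mid p$ is actually obtained --- the paper gets it from a snake lemma on the Coleman-map diagram, using that $\mathrm{Col}^\bullet_v$ has finite cokernel and that $\A^{G_{K_v}}/(\gamma-1)\A^{G_{K_v}}$ is finite --- but you correctly identify this as the crux and point to the right sources.
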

\begin{proof}
We first analyze the local conditions for $v\mid p.$ Let $z\in\Cohomology{1}{}{K_v}{\T}$ and $z_0\in\Cohomology{1}{}{K_v}{T}$ be its image under the natural map $\Cohomology{1}{}{K_v}{\T}\to\Cohomology{1}{}{K_v}{T}.$ As in \cite[Proposition 2.12]{Hatley-Lei}, we have that $z_0\in\Cohomology{1}{\f}{K_v}{T}$ if and only if $\trchar(\mathrm{Col}^\bullet_{v}(z))=0.$ Since $\Cohomology{1}{\bullet}{K_v}{\T}\subseteq\Cohomology{1}{}{K_v}{\T}$ is defined to be the kernel of $\mathrm{Col}^\bullet_{v},$ this means that we have the following natural map in cohomology
\begin{equation}
    \Cohomology{1}{\bullet}{K_v}{\T}\xrightarrow{a}\Cohomology{1}{\f}{K_v}{T}.
\end{equation}
This also means that the map $c$ in the commutative diagram below has the same kernel as the evaluation at $\trchar$ map.
\begin{equation}
    \begin{tikzcd}
    0\arrow{r}&\Cohomology{1}{\bullet}{K_v}{\T}\arrow{d}{a}\arrow{r}&\Cohomology{1}{}{K_v}{\T}\arrow{d}{b}\arrow{r}{\mathrm{Col}^\bullet_{v}}&\mathrm{Col}^\bullet_{v}\Cohomology{1}{}{K_v}{\T}\arrow{d}{c}\arrow{r}&0\\
    0\arrow{r}&\Cohomology{1}{\f}{K_v}{T}\arrow{r}&\Cohomology{1}{}{K_v}{T}\arrow{r}{\exp^*}&\Fil^0\dcris{V}&
    \end{tikzcd}
\end{equation}
By a Snake lemma, this means that we have an exact sequence
\begin{equation}[eq_control]
    0\to\frac{\mathrm{Col}^\bullet_{v}\Cohomology{1}{}{K_v}{\T}\cap(\gamma-1)\Lambda_{\infty}}{(\gamma-1)\mathrm{Col}^\bullet_{v}\Cohomology{1}{}{K_v}{\T}}\to\Coker a\to\Coker b,
\end{equation}
but as in \cite[Proposition 2.3]{CCSS}, we can prove that
\begin{equation}
    \mathrm{Col}^\bullet_{v}\colon \Cohomology{1}{}{K_v}{\T}\to\Lambda_{\infty}
\end{equation}
has finite cokernel, and hence that the first module in \eqref{eq_control} is finite. We also have that $\Coker b$ is finite, since it is dual to $\ker{\Cohomology{1}{}{K_v}{A[\p^\infty]}\to\Cohomology{1}{}{K_v}{\A}}\iso\A^{G_{K_v}}\slash(\gamma-1)\A^{G_{K_v}},$ which is finite by the proof of \cite[Proposition 3.3.7: case 3(b)]{JSW}. Hence we conclude from \eqref{eq_control} that $\Coker a$ is finite.

Now the control theorem follows from standard arguments in Iwasawa theory: consider the following diagram
\begin{equation}
    \begin{tikzcd}
        0\arrow{r}&\Sel_{\p^\infty}(A/K)\arrow{r}\arrow{d}{s}&\Cohomology{1}{}{K}{A[\p^\infty]}\arrow{r}\arrow{d}{h}&\mathcal{G}_A(K)\arrow{r}\arrow{d}{g}&0\\
        0\arrow{r}&\mathfrak{Sel}^{\bullet,\bullet}(K,\A)^\Gamma\arrow{r}&\Cohomology{1}{}{K}{\A}^\Gamma\arrow{r}&\mathcal{G}_\A(K)^\Gamma&
    \end{tikzcd}
\end{equation}
where
\begin{equation}
    \mathcal{P}_A(K)=\prod_{v}\frac{\Cohomology{1}{}{K_v}{A[\p^\infty]}}{\Cohomology{1}{\f}{K_v}{A[\p^\infty]}}\quad\text{and}\quad\mathcal{P}_\A(K)=\prod_{v\nmid p}\frac{\Cohomology{1}{}{K_v}{\A}}{\Cohomology{1}{\ur}{K}{\A}}\times\prod_{v\mid p}\frac{\Cohomology{1}{}{K_v}{\A}}{\Cohomology{1}{\bullet}{K_v}{\A}}
\end{equation}
and
\begin{equation}
    \mathcal{G}_A(K)=\mathrm{im}\left(\Cohomology{1}{}{K}{A[\p^\infty]}\to\mathcal{P}_A(K)\right)\quad\text{and}\quad\mathcal{G}_\A(K)=\mathrm{im}\left(\Cohomology{1}{}{K}{\A}\to\mathcal{P}_\A(K)\right).
\end{equation}
As in \cite[Lemmas 3.1, 3.2]{Greenberg}, we have that $\Coker h=0$ and that $\Ker h$ is finite.

Let $r_v$ be the factors of the map $r\colon \mathcal{P}_A(K)\to\mathcal{P}_\A(K).$ From the proof of \cite[Proposition 3.3.7]{JSW}, $\Ker r_v$ is finite when $v\nmid p$ and is $0$ when $A\slash K$ has good reduction at $v.$ For $v\mid p,$ the analysis of the local conditions above imply that $\Ker r_v$ is finite, since it is dual to $\Coker a.$ Hence $\Ker r$ finite, and we can conclude so is $\Ker g.$

By a Snake lemma, we conclude that $\Ker s$ and $\Coker s$ are finite.
\end{proof}

\begin{theorem}\label{parity}
Let $(g,\p)$ be a pair that satisfies \eqref{Heegner}, \eqref{good} and \eqref{res-surj}. Then we have
\begin{equation}
    \corank_{\O_\p}\Sel_{\p^\infty}(A/K)\equiv1\mod2.
\end{equation}
\end{theorem}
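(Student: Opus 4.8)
The plan is to combine the anticyclotomic control theorem \Cref{flat-sharp-control} with Howard's structure theory for the Heegner point Kolyvagin system over the Iwasawa algebra $\Lambda$. Fix $\bullet\in\{\flat,\sharp\}$; in the ordinary case one reads ``$\bullet$'' as the ordinary local condition at $p$ throughout, and the argument is the same. Set $X\defeq\mathfrak{Sel}^{\bullet,\bullet}(K,\A)^\vee$, a finitely generated $\Lambda$-module. Since Pontryagin duality interchanges $\Gamma$-invariants and $\Gamma$-coinvariants, the module $\mathfrak{Sel}^{\bullet,\bullet}(K,\A)^\Gamma=\mathfrak{Sel}^{\bullet,\bullet}(K,\A)[\gamma-1]$ has Pontryagin dual $X/(\gamma-1)X$, so that $\corank_{\O_\p}\mathfrak{Sel}^{\bullet,\bullet}(K,\A)^\Gamma=\rank_{\O_\p}\!\bigl(X/(\gamma-1)X\bigr)$. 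By \Cref{flat-sharp-control} the natural map $\Sel_{\p^\infty}(A/K)\to\mathfrak{Sel}^{\bullet,\bullet}(K,\A)^\Gamma$ has finite kernel and cokernel, so these two cofinitely generated $\O_\p$-modules have equal $\O_\p$-corank. Hence it suffices to show that $\rank_{\O_\p}\!\bigl(X/(\gamma-1)X\bigr)$ is odd.

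First I would recall the Heegner point Kolyvagin system attached to $\T=T\otimes_{\O_\p}\Lambda$ with the Selmer structure given by the Bloch--Kato (equivalently, unramified) condition at the places away from $p$ and the $\bullet$-condition at the places above $p$: in the ordinary case this is Howard's construction in \cite{Howard2}, built from the norm-compatible family of Heegner points over the ring class fields of $K$; in the non-ordinary case it is the analogue for the $\flat/\sharp$-Selmer groups supplied by \cite{CCSS}. Via Shapiro's lemma, $\mathfrak{Sel}^{\bullet,\bullet}(K,\A)$ is identified with the corresponding Selmer group of $A[\p^\infty]$ over $K_\infty^\ac$. The hypotheses \eqref{Heegner}, \eqref{good}, \eqref{res-surj}, together with \eqref{p-big}, \eqref{split} and $D<-4$, are what is needed both for this construction and for the structure theorem below. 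The crucial input is that this Kolyvagin system is \emph{nonzero}: the basic Iwasawa-theoretic Heegner class generates a free rank-one $\Lambda$-submodule of $\Cohomology{1}{}{K}{\T}$, by the nonvanishing of Heegner points along the anticyclotomic tower due to Cornut and Vatsal (see also \cite{Howard2}).

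With this in hand, Howard's structure theorem (\cite{Howard2}, and its $\flat/\sharp$ version in \cite{CCSS}) gives that $X$ has $\Lambda$-rank exactly $1$ and that there is a finitely generated torsion $\Lambda$-module $M$ together with a pseudo-isomorphism $X\sim\Lambda\oplus M\oplus M$. I would then combine this with the standard identity, valid for any finitely generated $\Lambda$-module $Y$,
\[
\rank_{\O_\p}\!\bigl(Y/(\gamma-1)Y\bigr)=\rank_\Lambda Y+\rank_{\O_\p}\!\bigl(Y[\gamma-1]\bigr),
\]
noting that $\rank_\Lambda X=1$ and that $\rank_{\O_\p}\bigl(X[\gamma-1]\bigr)$ depends only on the pseudo-isomorphism class of $X$ (a pseudo-isomorphism has finite kernel and cokernel, which do not contribute to $\O_\p$-ranks of $(\gamma-1)$-torsion), hence equals $\rank_{\O_\p}\bigl((M\oplus M)[\gamma-1]\bigr)=2\,\rank_{\O_\p}\bigl(M[\gamma-1]\bigr)$. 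Therefore $\rank_{\O_\p}\!\bigl(X/(\gamma-1)X\bigr)=1+2\,\rank_{\O_\p}\bigl(M[\gamma-1]\bigr)$ is odd, and combining with the first paragraph finishes the proof.

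The linear-algebra bookkeeping in the last paragraph is routine; the step I expect to require the most care — and the real content of the argument — is the availability of the Heegner point Kolyvagin system in the non-ordinary case with the $\flat/\sharp$ local conditions and its compatibility with Howard's axiomatic framework: one must know that the $\flat/\sharp$-conditions at the primes above $p$ are cartesian and self-dual in the relevant sense, that the Iwasawa-theoretic Heegner classes indeed satisfy the $\bullet$-condition at $p$, and that Howard's structure theorem transfers verbatim to this Selmer structure — this is precisely what \cite{CCSS} is invoked for, while in the ordinary case everything is already in \cite{Howard2}. A secondary point is to verify that our running hypotheses \eqref{Heegner}, \eqref{good} and \eqref{res-surj} subsume all the hypotheses of \cite{Howard2} and \cite{CCSS} and of the Cornut--Vatsal nonvanishing theorem.
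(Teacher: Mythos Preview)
Your proposal is correct and follows essentially the same approach as the paper: Howard's structure theorem $X\sim\Lambda\oplus M\oplus M$ (via \cite{Howard2} in the ordinary case and \cite{CCSS} for $\flat/\sharp$ in the non-ordinary case), combined with the anticyclotomic control theorem, yields odd corank. The paper treats the ordinary and non-ordinary cases separately (using Howard's own control theorem \cite[Lemmas 3.2.11, 3.2.12]{Howard2} in the former rather than \Cref{flat-sharp-control}) and leaves the ``$\rank_{\O_\p}(X/(\gamma-1)X)$ is odd'' step implicit, whereas you unify the two cases and spell out the linear algebra via the identity $\rank_{\O_\p}(Y/(\gamma-1)Y)=\rank_\Lambda Y+\rank_{\O_\p}(Y[\gamma-1])$.
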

\begin{proof}
For the ordinary case, we consider the ordinary Selmer group $\mathrm{Sel}^{\mathrm{ord}}(K,\A)$ as in \cite[Definition 3.2.2]{Howard2}. By \cite[Theorem 3.4.2]{Howard2}, we have a pseudo-isomorphism\footnote{As explained in \cite[Theorem 3.1]{BCK}, we may take $M_\P=0$ in \cite[Theorem 3.4.2]{Howard2}.}
\begin{equation}[howard-structure-ord]
    \mathrm{Hom}_{\O_\p}\left(\mathrm{Sel}^{\mathrm{ord}}(K,\A),F_\p/\O_\p\right)\sim\Lambda\oplus M\oplus M
\end{equation}
for a $\Lambda$-torsion module $M.$

Now \eqref{howard-structure-ord} implies that $\mathrm{Sel}^{\mathrm{ord}}(K,\A)^\Gamma$ has odd $\O_\p$-corank. The control theorem \cite[Lemmas 3.2.11, 3.2.12]{Howard2} says that the natural map $\mathrm{Sel}_{\p^\infty}(A/K)\to \mathrm{Sel}^{\mathrm{ord}}(K,\A)^\Gamma$ has finite kernel and cokernel, and from this we may conclude that $\corank_{\O_\p}\mathrm{Sel}_{\p^\infty}(A/K)$ is odd.

For the non-ordinary case, we repeat the argument above, but with $\flat/\sharp$-Selmer groups. For $\bullet\in\{\flat,\sharp\},$ we have, as in the proof of \cite[Theorem 5.7]{CCSS}\footnote{The $\flat,\sharp$-Selmer groups admit Kolyvagin systems in the sense of \cite{Howard2}, as constructed in \cite[Proposition 5.6]{CCSS}, and so \eqref{howard-structure} follows from the proof of \cite[Theorem 3.4.2]{Howard2}.}, that
\begin{equation}[howard-structure]
    \mathrm{Hom}_{\O_\p}\left(\mathfrak{Sel}^{\bullet,\bullet}(K,\A),F_\p/\O_\p\right)\sim\Lambda\oplus M^\bullet\oplus M^\bullet
\end{equation}
for a $\Lambda$-torsion module $M^\bullet.$

Now \eqref{howard-structure} implies that $\mathfrak{Sel}^{\bullet,\bullet}(K,\A)^\Gamma$ has odd $\O_\p$-corank, and together with \cref{flat-sharp-control} this implies that $\corank_{\O_\p}\Sel_{\p^\infty}(A/K)$ is odd.
\end{proof}

\section{Galois cohomology}\label{cohomology-section}
\subsection{Selmer structures}
We recall the setup of \cite[Chapter 2]{Mazur-Rubin} for Selmer structures.

Let $L\slash\Q_p$ be a finite extension and $\O_L$ its ring of integers. Let $F$ be a number field, and $M$ be an $\O_L$-module with a continuous $\O_L$-linear action of $G_F$ that is unramified except for finitely many primes.

\begin{definition}
A Selmer structure $\L=(\L_v)_v$ for $M$ is a collection of $\O$-submodules $\L_v$ indexed by the places of $F$
\begin{equation}
    \L_v\subseteq \Cohomology{1}{}{F_v}{M}
\end{equation}
such that, for all but finitely many $v,$ we have
\begin{equation}
    \L_v=\Cohomology{1}{\mathrm{ur}}{F_v}{M}\defeq \mathrm{Ker}\left(\Cohomology{1}{}{F_v}{M}\to \Cohomology{1}{}{I_v}{M}\right).
\end{equation}

We consider the associated Selmer group
\begin{equation}
    \Cohomology{1}{\L}{F}{M}\defeq \{c\in\Cohomology{1}{}{F}{M}\colon \loc{v}(c)\in\L_v\text{ for all }v\}.
\end{equation}
\end{definition}

We recall a well-known consequence of Poitou--Tate global duality:

\begin{lemma}[{{\cite[Theorem 2.19]{Fermat's-last}}}]
Let $M$ have finite order, and $\L$ be a Selmer structure for $M$. Then
\begin{equation}
    \frac{\abs{\Cohomology{1}{\L}{F}{M}}}{\abs{\Cohomology{1}{\L^*}{F}{M^*}}}=\frac{\abs{\Cohomology{0}{}{F}{M}}}{\abs{\Cohomology{0}{}{F}{M^*}}}\prod_{v}\frac{\abs{\L_v}}{\abs{\Cohomology{0}{}{F_v}{M}}}.
\end{equation}
\end{lemma}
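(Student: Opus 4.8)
The statement is the Greenberg--Wiles formula, a standard consequence of Poitou--Tate global duality and the Euler--Poincar\'e characteristic formulas; I would prove it along the usual lines. First I would enlarge the finite set $S$ of places of $F$ so that it contains all archimedean places, all places dividing $\abs{M}$, all places at which $M$ ramifies, and every $v$ with $\L_v\neq\Cohomology{1}{\ur}{F_v}{M}$; let $G_S$ be the Galois group of the maximal extension of $F$ unramified outside $S.$ With this choice $\Cohomology{1}{\L}{F}{M}$ is exactly the preimage of $\bigoplus_{v\in S}\L_v$ under the localization map $\lambda\colon\Cohomology{1}{}{G_S}{M}\to\bigoplus_{v\in S}\Cohomology{1}{}{F_v}{M},$ and similarly $\Cohomology{1}{\L^*}{F}{M^*}$ is the preimage of $\bigoplus_{v\in S}\L_v^*$ (with $\L_v^*=\L_v^\perp$) under the corresponding map $\lambda^*.$ I would then use Poitou--Tate duality in two forms: $\mathrm{im}(\lambda)$ and $\mathrm{im}(\lambda^*)$ are mutual exact orthogonal complements under the sum of the local Tate cup-product pairings, and $\Ker\lambda,\Ker\lambda^*$ are the finite Tate--Shafarevich groups $\Sha^1_S(M),\Sha^1_S(M^*),$ with $\Sha^1_S(M^*)\cong\Sha^2_S(M)^\vee.$

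Next comes a finite-group counting step. For subgroups $A,B$ of a finite abelian group $V$ with a perfect pairing, the identity $(A+B)^\perp=A^\perp\cap B^\perp$ yields $\abs{(A\cap B)}/\abs{(A^\perp\cap B^\perp)}=\abs{A}\abs{B}/\abs{V}.$ Taking $A=\mathrm{im}(\lambda),$ $B=\bigoplus_{v\in S}\L_v,$ and $V=\bigoplus_{v\in S}\Cohomology{1}{}{F_v}{M}$ (so $A^\perp=\mathrm{im}(\lambda^*)$ and $B^\perp=\bigoplus_{v\in S}\L_v^*$), and using $\abs{\Cohomology{1}{\L}{F}{M}}=\abs{\Ker\lambda}\cdot\abs{(\mathrm{im}(\lambda)\cap B)}$ together with the analogous identity for $M^*,$ I would divide the two to get
\[
\frac{\abs{\Cohomology{1}{\L}{F}{M}}}{\abs{\Cohomology{1}{\L^*}{F}{M^*}}}
=\frac{\abs{\Ker\lambda}\cdot\abs{\mathrm{im}(\lambda)}}{\abs{\Ker\lambda^*}}\prod_{v\in S}\frac{\abs{\L_v}}{\abs{\Cohomology{1}{}{F_v}{M}}}
=\frac{\abs{\Cohomology{1}{}{G_S}{M}}}{\abs{\Sha^1_S(M^*)}}\prod_{v\in S}\frac{\abs{\L_v}}{\abs{\Cohomology{1}{}{F_v}{M}}}.
\]

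The final task is to evaluate the purely global ratio $\abs{\Cohomology{1}{}{G_S}{M}}/\abs{\Sha^1_S(M^*)}.$ Here I would feed the remaining terms of the nine-term Poitou--Tate exact sequence (those involving $\mathrm{H}^0$ and $\mathrm{H}^2$, along with $\Sha^1_S(M^*)\cong\Sha^2_S(M)^\vee$), the local duality $\Cohomology{2}{}{F_v}{M}\cong\Cohomology{0}{}{F_v}{M^*}^\vee$, the local Euler--Poincar\'e formula, and Tate's global Euler--Poincar\'e formula into one Euler-characteristic computation, which after the archimedean contributions cancel gives
\[
\frac{\abs{\Cohomology{1}{}{G_S}{M}}}{\abs{\Sha^1_S(M^*)}}
=\frac{\abs{\Cohomology{0}{}{F}{M}}}{\abs{\Cohomology{0}{}{F}{M^*}}}\prod_{v\in S}\frac{\abs{\Cohomology{1}{}{F_v}{M}}}{\abs{\Cohomology{0}{}{F_v}{M}}}.
\]
Substituting into the previous display the factors $\abs{\Cohomology{1}{}{F_v}{M}}$ cancel, and since for every finite $v\notin S$ one has $\abs{\L_v}=\abs{\Cohomology{1}{\ur}{F_v}{M}}=\abs{\Cohomology{0}{}{F_v}{M}}$ (as $M$ is unramified at $v$ and $v\nmid\abs{M}$), the product extends over all places of $F,$ which is the asserted formula. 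The one step I expect to need genuine care is this last Euler-characteristic bookkeeping: one must track all nine Poitou--Tate terms and check that the normalization factors appearing in the local Euler--Poincar\'e formulas at the places above the residue characteristic of $M$ cancel precisely against the archimedean terms coming from the global formula. (For $F=\Q$ and $M$ of odd order, which is the only case used in this paper, every archimedean term is trivial and this subtlety disappears; alternatively one can avoid it altogether by changing one local condition at a time and reducing to the canonical unramified Selmer structure, where both sides are computed directly, as in \cite{Mazur-Rubin}.)
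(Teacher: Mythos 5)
Your argument is the standard Greenberg--Wiles proof (Poitou--Tate exact orthogonality of the global images, the counting identity $\abs{A\cap B}/\abs{A^\perp\cap B^\perp}=\abs{A}\abs{B}/\abs{V}$, and the local and global Euler characteristic formulas), and it is correct as outlined; the paper does not reprove this lemma but simply quotes it from the cited reference, where essentially this argument appears, so there is nothing to compare beyond noting that you have reproduced the standard proof. One small correction: the lemma is applied in this paper with $F=K$ an imaginary quadratic field rather than $F=\Q$, but since $K$ has a single complex place and $p\ge5$, the archimedean bookkeeping you rightly flag as the delicate step is still vacuous in every case the paper needs.
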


If $M$ is also self-dual, we can rephrase this theorem in a way which will be more useful to us:

\begin{corollary}\label{Poitou-Tate}
Let $M$ have finite order and be self-dual. Let $\L$ be a Selmer structure for $M$. Then
\begin{equation}
    \frac{\abs{\Cohomology{1}{\L}{F}{M}}}{\abs{\Cohomology{1}{\L^*}{F}{M}}}=\prod_{v}\frac{\abs{\L_v}}{\sqrt{\abs{\Cohomology{1}{}{F_v}{M}}}}.
\end{equation}
\end{corollary}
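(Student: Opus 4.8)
The plan is to derive the Corollary directly from the preceding lemma: plug in the self-duality hypothesis $M^*\iso M$ and then collapse, place by place, each factor $\#\Cohomology{0}{}{F_v}{M}$ into $\sqrt{\#\Cohomology{1}{}{F_v}{M}}$, using local duality, the local Euler characteristic formula, and the product formula. Applying the lemma, self-duality makes the global ratio $\#\Cohomology{0}{}{F}{M}/\#\Cohomology{0}{}{F}{M^*}$ equal to $1$ and identifies $\Cohomology{1}{\L^*}{F}{M^*}$ with $\Cohomology{1}{\L^*}{F}{M}$, so the Corollary reduces to the place-wise identity
\begin{equation}
    \bigl(\#\Cohomology{0}{}{F_v}{M}\bigr)^{2}=\|\#M\|_{v}\cdot\#\Cohomology{1}{}{F_v}{M}\qquad\text{for all places }v,
\end{equation}
where $\|\cdot\|_v$ is the normalized absolute value on $F$. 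Indeed, granting this, $\#\Cohomology{0}{}{F_v}{M}=\|\#M\|_v^{1/2}\cdot\#\Cohomology{1}{}{F_v}{M}^{1/2}$, and taking the product over all $v$ — a finite product, since almost every factor $\#\L_v/\sqrt{\#\Cohomology{1}{}{F_v}{M}}$ equals $1$ — the factors $\|\#M\|_v^{1/2}$ cancel by the product formula $\prod_v\|\#M\|_v=1$.

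It remains to prove the displayed local identity. For non-archimedean $v$ it is just local Tate duality combined with the local Euler characteristic formula: duality gives $\#\Cohomology{2}{}{F_v}{M}=\#\Cohomology{0}{}{F_v}{M^*}=\#\Cohomology{0}{}{F_v}{M}$, the last equality by self-duality, and substituting this into $\#\Cohomology{0}{}{F_v}{M}\cdot\#\Cohomology{2}{}{F_v}{M}=\|\#M\|_v\cdot\#\Cohomology{1}{}{F_v}{M}$ gives the claim. At a complex place both sides equal $(\#M)^{2}$, since the decomposition group is trivial and $\Cohomology{1}{}{F_v}{M}=0$. The case needing care is a real place $v$, where $G_{F_v}=\langle c\rangle$ has order $2$: then $\Cohomology{0}{}{F_v}{M}=\Ker(1-c)$, and combining the standard description $\Cohomology{1}{}{F_v}{M}=\Ker(1+c)/(1-c)M$ with the exact sequences $0\to\Ker(1-c)\to M\to(1-c)M\to0$ and $0\to\Ker(1+c)\to M\to(1+c)M\to0$ yields $\#\Cohomology{1}{}{F_v}{M}=\#\Ker(1-c)/\#(1+c)M$. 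Since $\|\#M\|_v=\#M=\#\Ker(1+c)\cdot\#(1+c)M$, the identity at $v$ reduces to $\#\Ker(1-c)=\#\Ker(1+c)$ on $M$. This is where self-duality enters at the archimedean place: on $M^*=\mathrm{Hom}(M,\mu_\infty)$, where $c$ acts by $(c\phi)(m)=\phi(cm)^{-1}$, the submodule $\Ker(1-c)$ consists of those $\phi$ trivial on $(1+c)M$, hence equals $\mathrm{Hom}(M/(1+c)M,\mu_\infty)$, of order $\#\Ker(1+c\mid M)$; comparing with $\#\Ker(1-c\mid M^*)=\#\Ker(1-c\mid M)$, which holds because $M\iso M^*$, gives the claim.

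The only real obstacle is this archimedean bookkeeping. Away from the real places the local identity is simply the self-dual repackaging of local duality together with the local Euler characteristic formula; but at a real place $\Cohomology{0}{}{F_v}{M}$ is honest Galois cohomology rather than a Tate cohomology group, so the identity there is not an Euler-characteristic statement and has to be extracted by hand from the way self-duality forces the $(\pm1)$-eigenspaces of complex conjugation on $M$ to have equal order. Everything else is a formal manipulation of cardinalities.
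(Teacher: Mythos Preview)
Your proof is correct and follows essentially the same strategy as the paper: apply the preceding lemma, use self-duality to cancel the global $\Cohomology{0}{}{F}{M}$ terms and to identify $\Cohomology{1}{\L^*}{F}{M^*}$ with $\Cohomology{1}{\L^*}{F}{M}$, and then reduce the remaining local product to an Euler-characteristic/product-formula computation.

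The one place where your write-up differs from the paper's is the archimedean bookkeeping. The paper squares the expression, rewrites each factor as an inverse local Euler characteristic, and then cites formulas from Hida's book for $\chi(F_v,M)$ at all places (finite and infinite), concluding via $\sum_{v\mid p}e(v)f(v)=[F:\Q]=r+2s$. You instead isolate a clean place-by-place identity $(\#\Cohomology{0}{}{F_v}{M})^2=\|\#M\|_v\cdot\#\Cohomology{1}{}{F_v}{M}$ and verify it by hand at real places, using self-duality to force $\#\Ker(1-c)=\#\Ker(1+c)$. This is a bit more self-contained: the paper's step ``by \eqref{eq_loc-dual}'' tacitly invokes local duality at archimedean places, which in ordinary (non-Tate) cohomology requires exactly the kind of eigenspace check you carry out. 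In the paper's actual applications $F=K$ is imaginary quadratic, so real places never arise and the distinction is moot; but as a proof of the corollary in the stated generality, your treatment of the real places is the more transparent one.
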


\begin{proof}
By local duality and by the self-duality of $M,$ we have
\begin{equation}[eq_loc-dual]
    \abs{\Cohomology{0}{}{F_v}{M}}=\abs{\Cohomology{2}{}{F_v}{M^*}}=\abs{\Cohomology{2}{}{F_v}{M}}=\abs{\Cohomology{0}{}{F_v}{M^*}}.
\end{equation}

\indent Let 
\begin{equation}
    \Sigma=\left\{v\mid \abs{M}\cdot\infty\right\}\cup\left\{v\colon \L_v\neq\Cohomology{1}{\mathrm{ur}}{F_v}{M}\right\}.
\end{equation}
Then $\Sigma$ is a finite set, and satisfy
\begin{equation}
    v\notin\Sigma\implies \sqrt{\abs{\Cohomology{1}{}{F_v}{M}}}=\abs{\Cohomology{1}{\mathrm{ur}}{F_v}{M}}=\abs{\L_v}.
\end{equation}
So we only need to prove that
\begin{equation}
    \prod_{v\in\Sigma}\frac{\sqrt{\abs{\Cohomology{1}{}{F_v}{M}}}}{\abs{\Cohomology{0}{}{F_v}{M}}}=1.
\end{equation}
The square of the left side of such expression is, by \eqref{eq_loc-dual}, simply
\begin{equation}
    \prod_{v\in\Sigma}\frac{\abs{\Cohomology{1}{}{F_v}{M}}}{\abs{\Cohomology{0}{}{F_v}{M}}\cdot \abs{\Cohomology{2}{}{F_v}{M}}}=\prod_{v\in\Sigma}\frac{1}{\chi(F_v,M)}.
\end{equation}
Using the formulas for the local Euler characteristics in \cite[Theorem 4.45]{Hida} and \cite[Theorem 4.52]{Hida}, we have
\begin{equation}
    \prod_{v\in\Sigma}\chi(F_v,M)=\prod_p\prod_{v\mid p}p^{-e(v)f(v)\nu_p(\abs{M})}\cdot\prod_{v\text{ real}}\abs{M}\cdot\prod_{v\text{ complex}}(\abs{M})^2
\end{equation}
and since $\sum_{v\mid p}e(v)f(v)=\index{F}{\Q}$ and $r+2s=\index{F}{\Q},$ this becomes
\begin{equation}
    \prod_{v\in\Sigma}\chi(F_v,M)=\prod_v\chi(F_v,M)=\left(\prod_pp^{-\nu(\abs{M})}\right)^{\index{F}{\Q}}\cdot (\abs{M})^{\index{F}{\Q}}=1^{\index{F}{\Q}}=1.
    \qedhere
\end{equation}
\end{proof}

\subsection{Local conditions}\label{local-subsection}
We consider the \emph{strict} local condition $\L_v=\{0\}$ and the \emph{relaxed} local condition $\L_v=\Cohomology{1}{}{F_v}{M}.$

\indent Given a Selmer structure $\L$ for $M$ and given products of places $R$ and $S$ that do not share any place, we denote by $\L^R_S$ the Selmer structure that differs by $\L$ by being strict at $S$ and relaxed at $R,$ that is,
\begin{equation}[Selmer-notation]
    (\L^R_S)_v=\left\{
    \begin{array}{ccl}
    \Cohomology{1}{}{F_v}{M} &\text{if}&v\mid R,\\
    0&\text{if}&v\mid S,\\
    \L_v&&\text{otherwise.}
    \end{array}
    \right.
\end{equation}

\indent For the module $V_k=A[\p],$ we also consider the \emph{finite} local condition
\begin{equation}
    \Cohomology{1}{\f}{K_v}{V_k}=\Im{\delta_v},
\end{equation}
where $\delta_v\colon A(K_v)\to \Cohomology{1}{}{K_v}{V_k}$ is the local Kummer map. These form a Selmer structure $\BK.$ We also define the finite condition for $\Cohomology{1}{\f}{K_v}{V}$ by propagation: it is the pre-image of $\Cohomology{1}{\f}{K_v}{V_k}$ in
\begin{equation}
    \Cohomology{1}{}{K_v}{V}\to\Cohomology{1}{}{K_v}{V_k}.
\end{equation}

\indent We note that $V$ is self-dual, and that $\Cohomology{1}{\f}{K_v}{V}$ is its own annihilator under Tate local duality. In particular, we have $\abs{\Cohomology{1}{\f}{K_v}{V}}=\sqrt{\Cohomology{1}{}{K_v}{V}}.$ Moreover, such local conditions are the unramified condition for all but finitely many primes, and hence form a Selmer structure, which we denote by $\tensor*[_g]{\L}{}.$

Since $V$ is self-dual, from \eqref{split} and \eqref{no-local-tor} we have, when $v\mid p,$ that
\begin{equation}
    \abs{\Cohomology{1}{}{K_v}{V}}=\frac{\abs(\Cohomology{0}{}{K_v}{V})^2}{\chi(K_v,V)}=p^{e(v)f(v)\nu_p(\abs{V})}=(\abs k_0)^2,
\end{equation}
and hence
\begin{equation}[f-is-one-dim]
\abs{\Cohomology{1}{\f}{K_v}{V}}=\abs{k_0}.
\end{equation}

We also consider the \emph{transverse} local condition for certain primes. A prime $q\nmid NDp$ is caled \emph{admissible} for $(g,\p)$ if it is inert in $K$ and satisfy
\begin{equation}
    p\nmid q^2-1\quad\text{and}\quad\p\mid(q+1)^2-a_q^2.
\end{equation}
We denote by $\Admissible_g$ the set of square-free products of admissible primes. We also denote by $\Admissible_g(m)\subseteq\Admissible_g$ the subset of elements coprime with $m.$

The following results are from \cite[Lemma 4.2]{Wei-Zhang}. Let $q\in\Admissible_g.$ We have a unique direct sum decomposition
\begin{equation}
    V\iso k_0\oplus k_0(1)
\end{equation}
as $G_{K_q}$-modules, which induces
\begin{equation}
    \Cohomology{1}{}{K_q}{V}=\Cohomology{1}{}{K_q}{k_0}\oplus\Cohomology{1}{}{K_q}{k_0(1)}.
\end{equation}
Moreover, we have the identification $\Cohomology{1}{}{K_q}{k_0}=\Cohomology{1}{\f}{K_q}{V}.$ We define the \emph{transverse} condition to be
\begin{equation}
    \Cohomology{1}{\t}{K_q}{V}\defeq\Cohomology{1}{}{K_q}{k_0(1)},
\end{equation}
and these satisfy
\begin{equation}
    \dim_{k_0}\Cohomology{1}{\f}{K_q}{V}=\dim_{k_0}\Cohomology{1}{\t}{K_q}{V}=1.
\end{equation}

\indent We can extend the notation in \eqref{Selmer-notation} as follows: for a Selmer structure $\L$ for $V,$ we let $\L^R_S(T)$ be the Selmer structure defined by
\begin{equation}
    (\L^R_S(T))_v=\left\{
    \begin{array}{ccl}
    \Cohomology{1}{}{K_v}{V} &\text{if}&v\mid R,\\
    \Cohomology{1}{\t}{K_v}{V}&\text{if}&v\mid T,\\
    0&\text{if}&v\mid S,\\
    \L_v&&\text{otherwise,}
    \end{array}
    \right.
\end{equation}
where $R,S,T$ do not share any places and $T\in\Admissible_g.$

\indent We also record the following well-known application of Chebotarev regarding admissible primes.

\begin{lemma}[{{\cite[Theorem 3.2]{Bertolini-Darmon}}}]\label{enough-admissible}
    Assume \eqref{p-big} and \eqref{res-surj} and let $c\in\Cohomology{1}{}{K}{V}$ be a non trivial class. Then there is a positive density of admissible primes $q$ such that $\loc{q}(c)\neq 0.$
\end{lemma}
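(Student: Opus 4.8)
The plan is to run the standard Chebotarev argument, following \cite[Theorem 3.2]{Bertolini-Darmon}. Recall that in this section $V=V_g$ is a two-dimensional $k_0$-space on which $G_\Q$ acts through the surjection $\overline{\rho}_{g,\p_0}\colon G_\Q\to\mathrm{GL}(V)$. Put $L=\Q(A[\p])$, so $\Gal{L}{\Q}\cong\mathrm{GL}(V)$ and $\mu_p\subseteq L$ by the Weil pairing, and set $\tilde L=LK$; since $\#k_0\ge p\ge5$, every index-two subgroup of $\mathrm{GL}(V)$ contains $\mathrm{SL}(V)$, so $\mathrm{SL}(V)\subseteq\Gal{\tilde L}{K}$ acts on $V$, whence $V$ is absolutely irreducible over $\Gal{\tilde L}{K}$ and $\Cohomology{1}{}{\Gal{\tilde L}{K}}{V}=0$ (both standard consequences of \eqref{res-surj}, \eqref{p-big}). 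By inflation--restriction, $c$ restricts to a nonzero, hence surjective, $\Gal{\tilde L}{K}$-equivariant homomorphism $\overline c\colon G_{\tilde L}\to V$. Let $\tilde L_c=\tilde L^{\ker\overline c}$: then $\tilde L_c/K$ is Galois with $1\to V\to\Gal{\tilde L_c}{K}\to\Gal{\tilde L}{K}\to1$ and $\Gal{\tilde L_c}{\tilde L}\cong V$, the class $c$ inflates from $\Gal{\tilde L_c}{K}$, and its restriction to $\Gal{\tilde L_c}{\tilde L}=V$ is the identity.

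Next I would fix the Frobenius class to aim at. Choose $\lambda\in\F_p^\times$ with $\lambda\ne\pm1$ (possible as $p\ge5$), taking it, via the determinant-description of the quadratic subfields of $L$, so that an element of $\mathrm{GL}(V)$ with eigenvalues $1,\lambda$ is nontrivial on $K$. By Chebotarev inside the Galois closure of $\tilde L_c/\Q$, there is a positive density of primes $q\nmid NDp$ whose Frobenius restricts to an element $\tau\in\Gal{\tilde L}{\Q}$ with $\tau|_K\ne1$ and $\overline{\rho}(\tau)$ conjugate to $\mathrm{diag}(1,\lambda)$. Any such $q$ is inert in $K$, with unique place $w$; one has $q\equiv\lambda\not\equiv\pm1\bmod p$, so $p\nmid q^2-1$, and $\mathrm{Frob}_w=\tau^2$ acts on $V$ with distinct eigenvalues $1,q^2$, giving $a_q\equiv 1+\lambda$ and $\p\mid(q+1)^2-a_q^2$. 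Hence $q\in\Admissible_g$.

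It remains to force $\loc{q}(c)\ne0$. For all but finitely many $q$, $\tilde L_c/\tilde L$ is unramified at $w$; a short inflation--restriction computation then gives $\loc{q}(c)=0$ exactly when $c(\Phi_w)\in(\mathrm{Frob}_w-1)V$ for a Frobenius lift $\Phi_w\in\Gal{\tilde L_c}{K}$ of $\mathrm{Frob}_w$ — while in the ramified case $\overline c|_{I_w}\ne0$ already forces $\loc{q}(c)\ne0$ (using $c|_V=\mathrm{id}$). Since $V$ acts trivially on the coefficients and $c|_V=\mathrm{id}$, replacing $\Phi_w$ by $v\Phi_w$ changes $c(\Phi_w)$ to $c(\Phi_w)+v$; thus the assignment sending the conjugacy class of the lift to $[c(\Phi_w)]\in V/(\mathrm{Frob}_w-1)V$ is a bijection onto a set of size $\#k_0\ge5$, so exactly one of the lift-classes above $\mathrm{Frob}_w$ is ``bad''. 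A final Chebotarev application over $\Q$ — all imposed conditions being stable under conjugation — then produces a positive density of $q\in\Admissible_g$ realizing a ``good'' class, i.e.\ with $\loc{q}(c)\ne0$.

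The step needing the most care is the last one: setting up the dictionary between the vanishing of $\loc{q}(c)$ and a conjugacy condition on Frobenius in $\Gal{\tilde L_c}{K}$ (where one uses $c|_V=\mathrm{id}$ and the triviality of the $V$-action on coefficients), and checking that the ``good'' conjugacy classes — which are present since $\#k_0>1$ — are genuinely realized by Frobenii of primes $q$ with $\mathrm{Frob}_q\sim\tau$, with no parity-type obstruction arising upon squaring $\tau$ in the Galois closure of $\tilde L_c/\Q$. The surjectivity of $\overline c$ onto the irreducible $\Gal{\tilde L}{K}$-module $V$ is what guarantees this, exactly as in \cite[Theorem 3.2]{Bertolini-Darmon}.
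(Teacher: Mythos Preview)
The paper does not give its own proof of this lemma; it simply records the statement and cites \cite[Theorem 3.2]{Bertolini-Darmon}. Your sketch is a faithful outline of that standard Chebotarev argument and is essentially correct: the use of \eqref{res-surj} and \eqref{p-big} to get $\mathrm{SL}(V)\subseteq\Gal{\tilde L}{K}$, the vanishing of $\Cohomology{1}{}{\Gal{\tilde L}{K}}{V}$, the construction of $\tilde L_c$, the choice of $\tau$ with eigenvalues $1,\lambda$ (with $\lambda\in\F_p^\times\setminus\{\pm1\}$ chosen so that $\tau|_K\neq1$), and the verification of admissibility are all fine.

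Your own caveat in the last paragraph is well placed: since $c\in\Cohomology{1}{}{K}{V}$ need not be $\iota$-equivariant, $\tilde L_c/\Q$ need not be Galois, and one must genuinely check that among lifts $\sigma\in\Gal{M}{\Q}$ of $\tau$ (with $M$ the Galois closure) the squares $\sigma^2|_{\tilde L_c}$ realise a ``good'' $V$-coset. This does work, but it is the one place where your write-up leans on the reference rather than giving the computation. A clean way to make this step self-contained is to first replace $c$ by a nonzero eigencomponent $c^{\pm}$ for complex conjugation (legitimate since $p\neq2$); then $\tilde L_{c^{\pm}}/\Q$ is Galois, the squaring map on lifts of $\tau$ is the transparent map $v\mapsto (1+\tau)v$ on $V=\Gal{\tilde L_{c^{\pm}}}{\tilde L}$, and one checks directly that its image is not contained in $(\tau^2-1)V$. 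With that adjustment, your argument is complete.
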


\subsection{The parity lemma}
For this section, let $\L=\tensor*[_g]{\L}{}$ with $(g,\p)$ satisfying \cref{ass}.

We record the following consequence of the parity lemma of Gross--Parson \cite[Lemma 9]{Gross-Parson}.

\begin{lemma}[{{\cite[Lemma 5.3]{Wei-Zhang}}}]\label{parity-lemma}
    Let $q\in\Admissible_g$ be a prime. Then we have
    \begin{equation}
        \dim_{k_0}\Cohomology{1}{\L^q}{K}{V}=1+\dim_{k_0}\Cohomology{1}{\L_q}{K}{V}.
    \end{equation}
    Moreover, we have either
    \begin{equation}
        \Cohomology{1}{\L_q}{K}{V}=\Cohomology{1}{\L(q)}{K}{V}
        \quad\text{and}\quad
        \Cohomology{1}{\L^q}{K}{V}=\Cohomology{1}{\L}{K}{V}
    \end{equation}
    or
    \begin{equation}
        \Cohomology{1}{\L_q}{K}{V}=\Cohomology{1}{\L}{K}{V}
        \quad\text{and}\quad
        \Cohomology{1}{\L^q}{K}{V}=\Cohomology{1}{\L(q)}{K}{V}.
    \end{equation}
\end{lemma}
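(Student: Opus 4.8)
<br>

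The plan is to use the self-duality of $V$ together with the Poitou--Tate formula of \Cref{Poitou-Tate} to control the parity, and then to use \Cref{enough-admissible} (or rather an argument in its spirit) to pin down which of the two cases occurs. Recall that for $q \in \Admissible_g$, the three local conditions $\Cohomology{1}{\f}{K_q}{V}$, $\Cohomology{1}{\t}{K_q}{V}$, and $0$ at $q$ sit inside $\Cohomology{1}{}{K_q}{V}$, which is two-dimensional over $k_0$ (this is $\chi(K_q, V) = 1$ plus $\Cohomology{0}{}{K_q}{V} = k_0 \ne 0$). Moreover $\Cohomology{1}{\f}{K_q}{V}$ and $\Cohomology{1}{\t}{K_q}{V}$ are each one-dimensional, they are exact orthogonal complements of each other under the local Tate pairing (and each of $\Cohomology{1}{\f}{}{}$, $\Cohomology{1}{\t}{}{}$ is self-orthogonal, since the pairing is alternating on a split module $k_0 \oplus k_0(1)$), while $0$ and $\Cohomology{1}{}{K_q}{V}$ are dual to each other.

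First I would apply \Cref{Poitou-Tate} to the two Selmer structures $\L^q$ (relaxed at $q$) and $\L_q$ (strict at $q$). Since $(\L^q)^* = \L_q$ and $(\L_q)^* = \L^q$ away from $q$ these agree, and the two structures differ only in the local factor at $q$: for $\L^q$ the factor is $\abs{\Cohomology{1}{}{K_q}{V}}/\sqrt{\abs{\Cohomology{1}{}{K_q}{V}}} = \sqrt{\abs{\Cohomology{1}{}{K_q}{V}}} = \abs{k_0}$, while for $\L_q$ the factor is $\abs{\{0\}}/\sqrt{\abs{\Cohomology{1}{}{K_q}{V}}} = \abs{k_0}^{-1}$. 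Taking the ratio of the two instances of the formula in \Cref{Poitou-Tate} (everything else cancels, since the local factors at all $v \ne q$ are identical and the Selmer structures are dual as required), we get
\begin{equation}
    \frac{\abs{\Cohomology{1}{\L^q}{K}{V}}}{\abs{\Cohomology{1}{\L_q}{K}{V}}} = \abs{k_0}^2,
\end{equation}
which gives $\dim_{k_0}\Cohomology{1}{\L^q}{K}{V} = 2 + \dim_{k_0}\Cohomology{1}{\L_q}{K}{V}$ --- wait, that is off by one from the claim, so in fact one must be slightly more careful: $\Cohomology{1}{\L_q}{K}{V} \subseteq \Cohomology{1}{\L(q)}{K}{V} \subseteq \Cohomology{1}{\L}{K}{V} \subseteq \Cohomology{1}{\L^q}{K}{V}$, each inclusion having codimension $0$ or $1$ (since $\Cohomology{1}{\t}{K_q}{V}$ and $\Cohomology{1}{\f}{K_q}{V}$ are both codimension $1$ in $\Cohomology{1}{}{K_q}{V}$ and both contain $0$ with codimension $1$), and the total jump from $\L_q$ to $\L^q$ is exactly $2$; the asserted statement is that the middle jump $\Cohomology{1}{\L}{K}{V} \to \Cohomology{1}{\L^q}{K}{V}$ plus $\Cohomology{1}{\L_q}{K}{V} \to \Cohomology{1}{\L}{K}{V}$ sums to $1$, i.e. $\dim\Cohomology{1}{\L^q}{K}{V} = 1 + \dim\Cohomology{1}{\L_q}{K}{V}$, not $2$; so the correct bookkeeping is to apply \Cref{Poitou-Tate} to the pair $(\L, \L^*) = (\L, \L)$ (as $\L = \tensor*[_g]{\L}{}$ is self-dual by the self-duality remarks before \eqref{f-is-one-dim}) and to the pair $(\L(q), \L(q)^*) = (\L(q), \L(q))$ (the transverse condition is also self-orthogonal), observing the local factor at $q$ is $\abs{k_0}$ in both cases — so those Selmer groups have the same size — and then argue via the four-term chain of inclusions that the two codimension-$1$-or-$0$ jumps on the two sides of $\Cohomology{1}{\L}{K}{V} = \Cohomology{1}{\L(q)}{K}{V}$ force exactly one genuine jump. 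This is precisely the content of the Gross--Parson parity argument.

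More concretely, the cleanest route: let $W = \Cohomology{1}{\L^q}{K}{V}/\Cohomology{1}{\L_q}{K}{V}$, which injects into $\Cohomology{1}{}{K_q}{V}$ via $\loc_q$ (a class with trivial restriction at $q$ landing in $\L_q$ means it lies in $\Cohomology{1}{\L_q}{K}{V}$), so $\dim_{k_0} W \le 2$. Global Poitou--Tate (the nine-term exact sequence, or the comparison of $\L^q$ with its dual $\L_q$) shows $\Im(\loc_q : \Cohomology{1}{\L^q}{K}{V} \to \Cohomology{1}{}{K_q}{V})$ is its own orthogonal complement under the local pairing, hence exactly one-dimensional; this is the source of the "$+1$". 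Then $\Cohomology{1}{\L}{K}{V}$ and $\Cohomology{1}{\L(q)}{K}{V}$ are the preimages in $\Cohomology{1}{\L^q}{K}{V}$ of $\Cohomology{1}{\f}{K_q}{V}$ and $\Cohomology{1}{\t}{K_q}{V}$ respectively; since the image line is either contained in $\Cohomology{1}{\f}{K_q}{V}$, or in $\Cohomology{1}{\t}{K_q}{V}$, or transverse to both (but it cannot be contained in both, as they intersect trivially), exactly one of the two dichotomy statements holds: if the image line equals $\Cohomology{1}{\t}{K_q}{V}$ then $\Cohomology{1}{\L(q)}{K}{V} = \Cohomology{1}{\L^q}{K}{V}$ and $\Cohomology{1}{\L}{K}{V} = \Cohomology{1}{\L_q}{K}{V}$, and symmetrically. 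The main obstacle is carrying out the orthogonality bookkeeping precisely --- making sure that the image of $\loc_q$ is genuinely one-dimensional (this is where self-duality of $V$, the alternating nature of the local pairing on $k_0 \oplus k_0(1)$, and the global duality between $\L^q$ and $\L_q$ all get used), and ruling out the degenerate possibility that the image is zero or two-dimensional; but since this is exactly \cite[Lemma 9]{Gross-Parson} / \cite[Lemma 5.3]{Wei-Zhang}, I would ultimately just cite that, after verifying that the hypotheses (self-duality, $\Cohomology{0}{}{K_q}{V} \ne 0$, admissibility of $q$) are met in our setting, noting that \eqref{no-local-tor} and the admissibility conditions guarantee the relevant local cohomology groups have the right sizes.
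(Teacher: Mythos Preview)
The paper does not prove this lemma at all: it is simply recorded with a citation to \cite[Lemma 5.3]{Wei-Zhang} (and attributed to the Gross--Parson parity lemma \cite[Lemma 9]{Gross-Parson}). Since you ultimately propose to cite the same sources after checking the hypotheses, your conclusion matches the paper's treatment.

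That said, your sketch of the underlying argument has a genuine error that would break the dichotomy if you tried to carry it out. You repeatedly call the local Tate pairing on $\Cohomology{1}{}{K_q}{V}$ \emph{alternating}. It is not: because the Weil pairing on $V$ is alternating (symplectic), the induced cup-product pairing on $\Cohomology{1}{}{K_q}{V}$ is \emph{symmetric}. This is not cosmetic. The mechanism forcing the image of $\loc_q$ to be exactly one of the two lines $\Cohomology{1}{\f}{K_q}{V}$ or $\Cohomology{1}{\t}{K_q}{V}$ is that this image is a one-dimensional isotropic subspace of a two-dimensional space carrying a nondegenerate \emph{symmetric} form (a hyperbolic plane), and such a space has precisely two isotropic lines. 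If the pairing were alternating, every line would be isotropic and the dichotomy would not follow. Relatedly, the reason $\Cohomology{1}{\f}{}{}$ and $\Cohomology{1}{\t}{}{}$ are each isotropic is not ``the pairing is alternating'' but Galois equivariance: any $G_{K_q}$-map $k_0\times k_0\to k_0(1)$ or $k_0(1)\times k_0(1)\to k_0(1)$ is zero because $p\nmid q^2-1$.

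A smaller point: your first Poitou--Tate computation is muddled. A single application of \cref{Poitou-Tate} to $\L^q$ already gives $\abs{\Cohomology{1}{\L^q}{K}{V}}/\abs{\Cohomology{1}{\L_q}{K}{V}}=\abs{k_0}$, since $(\L^q)^*=\L_q$; there is no ``ratio of two instances'' needed, and the answer is $\abs{k_0}$, not $\abs{k_0}^2$. You eventually reach the right conclusion, but the detour is unnecessary.
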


\begin{corollary}\label{rank-lower}
    If $\dim_{k_0}\Cohomology{1}{\L}{K}{V}>0,$ then there is a positive density of admissible primes $q\in\Admissible_g$ such that
    \begin{equation}
        \dim_{k_0}\Cohomology{1}{\L(q)}{K}{V}=\dim_{k_0}\Cohomology{1}{\L}{K}{V}-1.
    \end{equation}
    \end{corollary}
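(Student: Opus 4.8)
The plan is to combine \Cref{parity-lemma} with \Cref{enough-admissible} to produce the required admissible prime. The key observation is that \Cref{parity-lemma} always produces a prime $q$ at which the Selmer dimension changes, but we need to control the \emph{direction} of that change: we want $\dim_{k_0}\Cohomology{1}{\L(q)}{K}{V}$ to drop by one relative to $\dim_{k_0}\Cohomology{1}{\L}{K}{V}$, which corresponds to the second dichotomy branch in \Cref{parity-lemma} (where $\Cohomology{1}{\L_q}{K}{V}=\Cohomology{1}{\L}{K}{V}$ and $\Cohomology{1}{\L^q}{K}{V}=\Cohomology{1}{\L(q)}{K}{V}$), since then $\dim_{k_0}\Cohomology{1}{\L(q)}{K}{V}=\dim_{k_0}\Cohomology{1}{\L^q}{K}{V}=1+\dim_{k_0}\Cohomology{1}{\L_q}{K}{V}=1+\dim_{k_0}\Cohomology{1}{\L}{K}{V}$ --- wait, that gives the wrong sign, so in fact the branch we want is the \emph{first} one, where $\Cohomology{1}{\L_q}{K}{V}=\Cohomology{1}{\L(q)}{K}{V}$ and $\Cohomology{1}{\L^q}{K}{V}=\Cohomology{1}{\L}{K}{V}$, giving $\dim_{k_0}\Cohomology{1}{\L}{K}{V}=1+\dim_{k_0}\Cohomology{1}{\L(q)}{K}{V}$, i.e. the drop by one that we want.

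So the steps are as follows. First, since $\dim_{k_0}\Cohomology{1}{\L}{K}{V}>0$, pick a nonzero class $c\in\Cohomology{1}{\L}{K}{V}\subseteq\Cohomology{1}{}{K}{V}$. By \Cref{enough-admissible}, there is a positive density of admissible primes $q\in\Admissible_g$ with $\loc{q}(c)\neq0$. Fix such a $q$ and apply \Cref{parity-lemma} to it. The point is that since $c\in\Cohomology{1}{\L}{K}{V}$ and $\loc{q}(c)\neq0$, the class $c$ does not lie in $\Cohomology{1}{\L_q}{K}{V}$ (whose defining condition at $q$ is the strict condition $\L_q$, i.e. $\loc{q}=0$), so $\Cohomology{1}{\L_q}{K}{V}\subsetneq\Cohomology{1}{\L}{K}{V}$. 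This rules out the second branch of the dichotomy (which would require $\Cohomology{1}{\L_q}{K}{V}=\Cohomology{1}{\L}{K}{V}$), forcing the first branch: $\Cohomology{1}{\L_q}{K}{V}=\Cohomology{1}{\L(q)}{K}{V}$ and $\Cohomology{1}{\L^q}{K}{V}=\Cohomology{1}{\L}{K}{V}$. Combining with the dimension formula in \Cref{parity-lemma} then yields
\[
\dim_{k_0}\Cohomology{1}{\L(q)}{K}{V}=\dim_{k_0}\Cohomology{1}{\L_q}{K}{V}=\dim_{k_0}\Cohomology{1}{\L^q}{K}{V}-1=\dim_{k_0}\Cohomology{1}{\L}{K}{V}-1,
\]
as desired. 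Since this works for every admissible $q$ with $\loc{q}(c)\neq0$, and those form a positive-density set, the conclusion follows.

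The main obstacle --- really the only subtle point --- is verifying that $\loc{q}(c)\neq0$ genuinely excludes the branch where $\Cohomology{1}{\L_q}{K}{V}=\Cohomology{1}{\L}{K}{V}$. This comes down to unwinding the notation: $\L_q$ denotes the Selmer structure obtained from $\L$ by imposing the strict (zero) condition at $q$, so $\Cohomology{1}{\L_q}{K}{V}=\{c'\in\Cohomology{1}{\L}{K}{V}:\loc{q}(c')=0\}$, and the witness class $c$ shows this is a proper subspace. One should double-check that the two branches in \Cref{parity-lemma} are genuinely mutually exclusive when $\Cohomology{1}{\L_q}{K}{V}\neq\Cohomology{1}{\L}{K}{V}$ --- but this is immediate since the second branch explicitly asserts equality there. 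No deep input beyond \Cref{parity-lemma} and \Cref{enough-admissible} is needed; the argument is a short bookkeeping exercise once the direction of the dichotomy is pinned down.
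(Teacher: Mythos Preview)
Your argument is correct and is exactly the intended unpacking of the paper's one-line proof, which simply says the corollary is a direct application of \Cref{parity-lemma} and \Cref{enough-admissible}. The only cosmetic issue is the stream-of-consciousness detour (``wait, that gives the wrong sign''); once that is cleaned up, your choice of a nonzero $c\in\Cohomology{1}{\L}{K}{V}$, the use of \Cref{enough-admissible} to force $\loc{q}(c)\neq0$, and the observation that this excludes the branch $\Cohomology{1}{\L_q}{K}{V}=\Cohomology{1}{\L}{K}{V}$ in \Cref{parity-lemma} is precisely the argument.
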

\begin{proof}
    This is a direct application of \cref{parity-lemma} and \cref{enough-admissible}.
\end{proof}

Now we prove an extension of the parity lemma, and deduce as a consequence a Chebotarev-type result that will be useful to deal with the rank one case.

\begin{lemma}\label{relaxed-parity-lemma}
    Let $v$ be any place of $K,$ and let $q\in\Admissible_g$ be a prime. Assume that
    \begin{equation}
        \Cohomology{1}{\L}{K}{V}=\Cohomology{1}{\L_v}{K}{V}\quad\text{and}\quad\Cohomology{1}{\L(q)}{K}{V}=\Cohomology{1}{\L_v(q)}{K}{V},
    \end{equation}
    that is, that both $\Cohomology{1}{\L}{K}{V}$ and $\Cohomology{1}{\L(q)}{K}{V}$ are strict at $v.$
    
    Then an analogous of the parity lemma holds for the relaxed Selmer groups: we have
    \begin{equation}
        \dim_{k_0}\Cohomology{1}{\L^{v,q}}{K}{V}=1+\dim_{k_0}\Cohomology{1}{\L^v_q}{K}{V}.
    \end{equation}
    Moreover, we have either
    \begin{equation}
        \Cohomology{1}{\L^R_q}{K}{V}=\Cohomology{1}{\L^R(q)}{K}{V}
        \text{ and }
        \Cohomology{1}{\L^{R,q}}{K}{V}=\Cohomology{1}{\L^R}{K}{V}, \text{ for both } R=1\text{ and } R =v
    \end{equation}
    or
    \begin{equation}
        \Cohomology{1}{\L^R_q}{K}{V}=\Cohomology{1}{\L^R}{K}{V}
        \text{ and }
        \Cohomology{1}{\L^{R,q}}{K}{V}=\Cohomology{1}{\L^R(q)}{K}{V}, \text{ for both } R=1\text{ and } R =v.
    \end{equation}
\end{lemma}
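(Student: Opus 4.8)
The plan is to re-run the Poitou--Tate/Greenberg--Wiles counting argument behind \cref{parity-lemma} in the presence of the extra relaxed place $v$, feeding \cref{parity-lemma} itself back in to exploit the hypotheses. I will use the notation of \eqref{Selmer-notation}, writing for instance $\L^q_v$ for the structure relaxed at $q$ and strict at $v$, and $\L_{v,q}$ for the structure strict at both $v$ and $q$; recall that $\L^1 = \L$, $\L^1_q = \L_q$, $\L^1(q) = \L(q)$ and $\L^{1,q} = \L^q$, so that the $R=1$ case of the asserted dichotomy is exactly the ``moreover'' of \cref{parity-lemma}, and the only new content is that the \emph{same} alternative holds for $R=v$, together with the first displayed equality. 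The crucial first step is to observe that the two hypotheses force both $\Cohomology{1}{\L_q}{K}{V}$ and $\Cohomology{1}{\L^q}{K}{V}$ to be strict at $v$. For $\Cohomology{1}{\L_q}{K}{V}$ this is immediate, since it lies in $\Cohomology{1}{\L}{K}{V}=\Cohomology{1}{\L_v}{K}{V}$, so all its classes localize to $0$ at $v$ and $\Cohomology{1}{\L_q}{K}{V}=\Cohomology{1}{\L_{v,q}}{K}{V}$. For $\Cohomology{1}{\L^q}{K}{V}$ I invoke \cref{parity-lemma}: it equals either $\Cohomology{1}{\L}{K}{V}$ or $\Cohomology{1}{\L(q)}{K}{V}$, and both of these are strict at $v$ by hypothesis, whence $\Cohomology{1}{\L^q}{K}{V}=\Cohomology{1}{\L^q_v}{K}{V}$.

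Next I establish the dimension formula by applying \cref{Poitou-Tate} to the self-dual module $V$ and to the Selmer structures $\L^v_q$ (whose dual is $\L^q_v$) and $\L^{v,q}$ (whose dual is $\L_{v,q}$). For the self-dual structure $\L$ every local factor $\abs{\L_w}/\sqrt{\abs{\Cohomology{1}{}{K_w}{V}}}$ equals $1$; relaxing at a place multiplies the relevant factor by $\sqrt{\abs{\Cohomology{1}{}{K_w}{V}}}$ and stricting by its reciprocal, and $\sqrt{\abs{\Cohomology{1}{}{K_q}{V}}}=\abs{k_0}$ for admissible $q$. This yields $\abs{\Cohomology{1}{\L^v_q}{K}{V}}/\abs{\Cohomology{1}{\L^q_v}{K}{V}}=\sqrt{\abs{\Cohomology{1}{}{K_v}{V}}}/\abs{k_0}$ and $\abs{\Cohomology{1}{\L^{v,q}}{K}{V}}/\abs{\Cohomology{1}{\L_{v,q}}{K}{V}}=\abs{k_0}\sqrt{\abs{\Cohomology{1}{}{K_v}{V}}}$. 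Substituting the two identities from the first step and the relation $\abs{\Cohomology{1}{\L^q}{K}{V}}/\abs{\Cohomology{1}{\L_q}{K}{V}}=\abs{k_0}$ from \cref{parity-lemma}, the factors $\sqrt{\abs{\Cohomology{1}{}{K_v}{V}}}$ cancel and one obtains $\abs{\Cohomology{1}{\L^{v,q}}{K}{V}}/\abs{\Cohomology{1}{\L^v_q}{K}{V}}=\abs{k_0}$, which is the first display.

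Finally I deduce the dichotomy from localization at $q$. The map $\loc{q}\colon\Cohomology{1}{\L^{v,q}}{K}{V}\to\Cohomology{1}{}{K_q}{V}=\Cohomology{1}{\f}{K_q}{V}\oplus\Cohomology{1}{\t}{K_q}{V}$ has kernel $\Cohomology{1}{\L^v_q}{K}{V}$, so by the dimension formula its image $J$ is a single $k_0$-line; since $\Cohomology{1}{\L^q}{K}{V}\subseteq\Cohomology{1}{\L^{v,q}}{K}{V}$, $J$ contains the image of $\loc{q}$ on $\Cohomology{1}{\L^q}{K}{V}$, which by \cref{parity-lemma} is one of the two lines $\Cohomology{1}{\f}{K_q}{V}$, $\Cohomology{1}{\t}{K_q}{V}$; hence $J$ equals that same line. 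For $R\in\{1,v\}$, $\Cohomology{1}{\L^R_q}{K}{V}$ is the kernel of $\loc{q}$ on $\Cohomology{1}{\L^{R,q}}{K}{V}$, while $\Cohomology{1}{\L^R}{K}{V}$ and $\Cohomology{1}{\L^R(q)}{K}{V}$ are the preimages of $\Cohomology{1}{\f}{K_q}{V}$ and $\Cohomology{1}{\t}{K_q}{V}$ therein; reading these off when $J=\Cohomology{1}{\f}{K_q}{V}$ gives the first alternative, and when $J=\Cohomology{1}{\t}{K_q}{V}$ the second, in each case simultaneously for $R=1$ and $R=v$ because the line $J$ is the same for both.

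The main obstacle is the first step: one has to recognize that the pair of hypotheses is precisely what is needed so that \cref{parity-lemma} propagates strictness at $v$ from $\Cohomology{1}{\L}{K}{V}$ and $\Cohomology{1}{\L(q)}{K}{V}$ up to the relaxed group $\Cohomology{1}{\L^q}{K}{V}$. After that the Poitou--Tate bookkeeping is routine, but one must keep track of the $v$-local factor $\sqrt{\abs{\Cohomology{1}{}{K_v}{V}}}$, which is genuinely nontrivial (for $v\mid p$ it equals $\abs{k_0}$ by \eqref{f-is-one-dim}) and cancels only once all three counting identities are combined.
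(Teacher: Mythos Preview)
Your argument is correct. The derivation of the dimension formula is essentially identical to the paper's: both use \cref{parity-lemma} together with the hypotheses to obtain $\Cohomology{1}{\L^q}{K}{V}=\Cohomology{1}{\L^q_v}{K}{V}$ and $\Cohomology{1}{\L_q}{K}{V}=\Cohomology{1}{\L_{v,q}}{K}{V}$, and then apply \cref{Poitou-Tate} to cancel the $v$-local factor and arrive at $\abs{\Cohomology{1}{\L^{v,q}}{K}{V}}/\abs{\Cohomology{1}{\L^v_q}{K}{V}}=\abs{k_0}$.

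The treatment of the dichotomy, however, is organized differently. The paper runs a second Poitou--Tate comparison to show
\[
\frac{\abs{\Cohomology{1}{\L^v}{K}{V}}}{\abs{\Cohomology{1}{\L^v(q)}{K}{V}}}
=\frac{\abs{\Cohomology{1}{\L}{K}{V}}}{\abs{\Cohomology{1}{\L(q)}{K}{V}}}=(\abs{k_0})^{\pm1},
\]
and then combines this with the inclusions $\Cohomology{1}{\L^v_q}{K}{V}\subseteq\Cohomology{1}{\L^v}{K}{V},\ \Cohomology{1}{\L^v(q)}{K}{V}\subseteq\Cohomology{1}{\L^{v,q}}{K}{V}$ and the already established dimension formula to force the same alternative for $R=v$ as for $R=1$. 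Your route is more geometric: you identify the one-dimensional image $J\subseteq\Cohomology{1}{}{K_q}{V}$ of $\loc_q$ on $\Cohomology{1}{\L^{v,q}}{K}{V}$, note that it contains (hence equals) the image coming from $\Cohomology{1}{\L^q}{K}{V}$, which \cref{parity-lemma} pins down as one of the two lines $\Cohomology{1}{\f}{K_q}{V}$ or $\Cohomology{1}{\t}{K_q}{V}$, and then read off the four Selmer groups for each $R$ as preimages of the appropriate pieces. This makes it transparent \emph{why} the same case of the dichotomy occurs for $R=1$ and $R=v$ simultaneously---it is governed by the single line $J$---whereas in the paper's version this comes out of a numerical coincidence. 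Both arguments are short; yours avoids the second invocation of \cref{Poitou-Tate} at the cost of unpacking the local structure at $q$ a bit more explicitly.
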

\begin{proof}
    By \cref{Poitou-Tate}, we have
    \begin{multline*}
        \frac{\abs{\Cohomology{1}{\L^{v,q}}{K}{V}}}{\abs{\Cohomology{1}{\L^v_q}{K}{V}}}=
        \frac{\abs{\Cohomology{1}{\L^{v,q}}{K}{V}}}{\abs{\Cohomology{1}{\L_{v,q}}{K}{V}}}\cdot
        \frac{\abs{\Cohomology{1}{\L_{v,q}}{K}{V}}}{\abs{\Cohomology{1}{\L_v^q}{K}{V}}}\cdot
        \frac{\abs{\Cohomology{1}{\L_v^q}{K}{V}}}{\abs{\Cohomology{1}{\L_q^v}{K}{V}}}
        =\\
        \left(\abs{k_0}\cdot \sqrt{\abs{\Cohomology{1}{}{K_v}{V}}}\right)\cdot \left(
        \frac{\abs{\Cohomology{1}{\L^q_v}{K}{V}}}{\abs{\Cohomology{1}{\L_{v,q}}{K}{V}}}
        \right)^{-1}\cdot\left(\frac{\abs{k_0}}{\sqrt{\abs{\Cohomology{1}{}{K_v}{V}}}}\right),
    \end{multline*}
    which implies that
    \begin{equation}[eq_relating-r]
        \frac{\abs{\Cohomology{1}{\L^{v,q}}{K}{V}}}{\abs{\Cohomology{1}{\L^v_q}{K}{V}}}=
        (\abs{k_0})^2\cdot \left(
        \frac{\abs{\Cohomology{1}{\L^q_v}{K}{V}}}{\abs{\Cohomology{1}{\L_{v,q}}{K}{V}}}
        \right)^{-1}.
    \end{equation}

    \indent \cref{parity-lemma} tell us that $\Cohomology{1}{\L^q}{K}{V}$ and $\Cohomology{1}{\L_q}{K}{V}$ are $\Cohomology{1}{\L}{K}{V}$ and $\Cohomology{1}{\L(q)}{K}{V}$ in some order. By assumption, these are strict at $v,$ that is, we have
    \begin{equation}
        \Cohomology{1}{\L^q}{K}{V}=\Cohomology{1}{\L^q_v}{K}{V}\text{ and }\Cohomology{1}{\L_q}{K}{V}=\Cohomology{1}{\L_{v,q}}{K}{V}.
    \end{equation}
    Hence we have
    \begin{equation}
        \frac{\abs{\Cohomology{1}{\L_v^q}{K}{V}}}{\abs{\Cohomology{1}{\L_{v,q}}{K}{V}}}=\frac{\abs{\Cohomology{1}{\L^q}{K}{V}}}{\abs{\Cohomology{1}{\L_q}{K}{V}}}=\abs{k_0},
    \end{equation}
    and thus \eqref{eq_relating-r} becomes
    \begin{equation}[eq_relating-r2]
        \frac{\abs{\Cohomology{1}{\L^{v,q}}{K}{V}}}{\abs{\Cohomology{1}{\L^v_q}{K}{V}}}=\abs{k_0},
    \end{equation}
    which is the first part of what we want to prove.
    
    Note that \cref{Poitou-Tate} also gives us that
    \begin{equation}
        \frac{\abs{\Cohomology{1}{\L^v}{K}{V}}}{\abs{\Cohomology{1}{\L_v}{K}{V}}}=\frac{\abs{\Cohomology{1}{\L^v(q)}{K}{V}}}{\abs{\Cohomology{1}{\L_v(q)}{K}{V}}},
    \end{equation}
    and hence
    \begin{equation}[eq_same-for-ms]
        \frac{\abs{\Cohomology{1}{\L^v}{K}{V}}}{\abs{\Cohomology{1}{\L^v(q)}{K}{V}}}=
        \frac{\abs{\Cohomology{1}{\L_v}{K}{V}}}{\abs{\Cohomology{1}{\L_v(q)}{K}{V}}}=
        \frac{\abs{\Cohomology{1}{\L}{K}{V}}}{\abs{\Cohomology{1}{\L(q)}{K}{V}}}=(\abs{k_0})^{\pm1}.
    \end{equation}

    Then \eqref{eq_same-for-ms} and \eqref{eq_relating-r2}, together with the inclusions
    \begin{equation}
        \Cohomology{1}{\L^v_q}{K}{V}\subseteq\Cohomology{1}{\L^v}{K}{V},\ \Cohomology{1}{\L^v(q)}{K}{V}\subseteq\Cohomology{1}{\L^{v,q}}{K}{V}
    \end{equation}
    imply the rest of the theorem.
\end{proof}

\begin{corollary}\label{nonzero-localization}
    Let $v$ be any place of $K$ with $\Cohomology{1}{}{K_v}{V}\neq 0,$ and assume that $\Cohomology{1}{\L}{K}{V}=0.$ Then there is a positive density of admissible primes $q$ such that
    \begin{equation*}
        \loc{v}(\Cohomology{1}{\L(q)}{K}{V})\neq 0.
    \end{equation*}
\end{corollary}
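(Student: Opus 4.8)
The plan is to feed a carefully chosen auxiliary class into \cref{enough-admissible} and then use \cref{relaxed-parity-lemma} to convert the resulting nonvanishing of $\loc{q}$ into the nonvanishing of $\loc{v}$ on $\Cohomology{1}{\L(q)}{K}{V}$.

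First I would produce the auxiliary class. Since $\Cohomology{1}{\L}{K}{V}=0$ and the strict condition at $v$ is contained in the finite one, we also have $\Cohomology{1}{\L_v}{K}{V}=0$. The dual Selmer structure of $\L^v$ is $\L_v$, because the relaxed and strict local conditions are mutual annihilators under local duality and $\Cohomology{1}{\f}{K_w}{V}$ is self-dual at every $w$; hence \cref{Poitou-Tate} gives
\begin{equation}
    \abs{\Cohomology{1}{\L^v}{K}{V}}=\abs{\Cohomology{1}{\L_v}{K}{V}}\cdot\sqrt{\abs{\Cohomology{1}{}{K_v}{V}}}=\sqrt{\abs{\Cohomology{1}{}{K_v}{V}}}>1,
\end{equation}
where we use the hypothesis $\Cohomology{1}{}{K_v}{V}\neq0$. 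So I may fix a nonzero class $c\in\Cohomology{1}{\L^v}{K}{V}$; it is finite at every place other than $v$, and since $\Cohomology{1}{\L}{K}{V}=0$ it must fail the finite condition at $v$, but all the argument needs is $c\neq0$.

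Next I would record the picture at a single admissible prime. For any $q\in\Admissible_g$ we have $\Cohomology{1}{\L_q}{K}{V}\subseteq\Cohomology{1}{\L}{K}{V}=0$, so \cref{parity-lemma} forces $\Cohomology{1}{\L_q}{K}{V}=\Cohomology{1}{\L}{K}{V}$ and $\Cohomology{1}{\L^q}{K}{V}=\Cohomology{1}{\L(q)}{K}{V}$, the latter of dimension $1$; in particular $\Cohomology{1}{\L^q}{K}{V}\neq\Cohomology{1}{\L}{K}{V}$. Then I would apply \cref{enough-admissible} to the nonzero class $c$ to obtain a positive density of admissible primes $q$ with $\loc{q}(c)\neq0$, and claim that each such $q$ works.

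To finish, I would suppose for contradiction that $\loc{v}$ vanishes on $\Cohomology{1}{\L(q)}{K}{V}$, i.e. that $\Cohomology{1}{\L(q)}{K}{V}=\Cohomology{1}{\L_v(q)}{K}{V}$. Together with $\Cohomology{1}{\L}{K}{V}=\Cohomology{1}{\L_v}{K}{V}$ this is exactly the hypothesis of \cref{relaxed-parity-lemma}. The first branch of its dichotomy would force $\Cohomology{1}{\L^q}{K}{V}=\Cohomology{1}{\L}{K}{V}$, contradicting the previous paragraph, so the second branch must hold; reading it off with $R=v$ yields $\Cohomology{1}{\L^v_q}{K}{V}=\Cohomology{1}{\L^v}{K}{V}$, hence $c$ lies in a Selmer group that is strict at $q$ and so $\loc{q}(c)=0$, contradicting the choice of $q$. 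I expect the only real idea to be the use of the relaxed group $\Cohomology{1}{\L^v}{K}{V}$, which is nonzero even though $\Cohomology{1}{\L}{K}{V}=0$, as the source of the Chebotarev class; the rest is bookkeeping with Selmer structures, the points to watch being the identification of the dual of $\L^v$ and the correct reading of the dichotomy in \cref{relaxed-parity-lemma}.
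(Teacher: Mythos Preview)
Your proof is correct and follows essentially the same route as the paper: produce a nonzero class in $\Cohomology{1}{\L^v}{K}{V}$ via \cref{Poitou-Tate}, feed it into \cref{enough-admissible}, and derive a contradiction from \cref{relaxed-parity-lemma}. You are in fact slightly more careful than the paper in explicitly ruling out both branches of the dichotomy in \cref{relaxed-parity-lemma}; the paper jumps directly to $\Cohomology{1}{\L^v}{K}{V}=\Cohomology{1}{\L^v_q}{K}{V}$, leaving implicit the exclusion of the first branch (which, as you note, would force $\Cohomology{1}{\L^q}{K}{V}=\Cohomology{1}{\L}{K}{V}=0$, contradicting \cref{parity-lemma}).
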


\begin{proof}
    We have $\abs{\Cohomology{1}{\L^v}{K}{V}}=\abs{\Cohomology{1}{\f}{K_v}{V}}>0$ by \cref{Poitou-Tate}, so we know that there is a positive density of admissible primes $q$ such that $\loc{q}(\Cohomology{1}{\L^v}{K}{V})\neq 0$ by \cref{enough-admissible}.
    
    We will prove such primes $q$ suffice. Assume by contradiction that we had $\loc{v}(\Cohomology{1}{\L(q)}{K}{V})=0.$ This would mean that 
    \begin{equation*}
        \Cohomology{1}{\L(q)}{K}{V}=\Cohomology{1}{\L_v(q)}{K}{V},
    \end{equation*}
    and since we have $0=\Cohomology{1}{\L}{K}{V}=\Cohomology{1}{\L_q}{K}{V},$ the last statement in \cref{relaxed-parity-lemma} would imply that
    \begin{equation*}
        \Cohomology{1}{\L^v}{K}{V}=\Cohomology{1}{\L^v_q}{K}{V},
    \end{equation*}
    which cannot be true since we chose $q$ such that $\loc{q}(\Cohomology{1}{\L_v}{K}{V})\neq 0.$
\end{proof}

\section{Proof of the main result}\label{proof-section}
\subsection{Level raising}
We denote by $\Admissible_g^+\subseteq\Admissible_g$ the subset of elements with an even number of prime factors.

The following is a summary of some of the properties of the constructions done in \cite{Wei-Zhang}.
\begin{theorem}\label{Wei-results}
    Let $(g,\p)$ be a pair satisfying \cref{ass}. For any $m\in\Admissible_g,$ there is an eigenform $g_m\in S_2(\Gamma_0(Nm))$ and a prime $\p_m$ above $p$ such that $\overline{\rho}_{g,\p_0}\iso\overline{\rho}_{g_m,\p_{m,0}}.$ Furthermore, the pair $(g_m,\p_m)$ satisfy \cref{ass}.
    
    Fix an identification $V_{g_m}\iso V\defeq V_g$ for all $m.$ If $\L=\tensor*[_g]{\L}{},$ then the Selmer structure of $g_m$ is given by $\tensor*[_{g_m}]{\L}{}=\L(m).$
    
    If $(g,\p)$ satisfy \eqref{Heegner} and $m\in\Admissible_g^+,$ then $(g_m,\p_m)$ also satisfy \eqref{Heegner}, and we denote by $c(n,m)\in\Cohomology{1}{\L(m)}{K}{V}$ the class $c_{g_m}(n).$ Then if $mq_1q_2\in\Admissible_g^+$ with $q_1,q_2$ primes, there is a suitable isomorphism 
    \begin{equation}
        \phi^{\f,\t}=\phi^{\f,\t}_{m,q_1,q_2}\colon \Cohomology{1}{\f}{K_{q_1}}{V}\rightiso \Cohomology{1}{\t}{K_{q_2}}{V}
    \end{equation*}
    such that for all $n\in\Kolyvagin_g,$ we have
    \begin{equation*}
        \phi^{\f,\t}(\loc{q_1}(c(n,m)))=\loc{q_2}(c(n,mq_1q_2)).
    \end{equation}
\end{theorem}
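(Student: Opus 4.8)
The statement collects several constructions from \cite{Wei-Zhang}, so the plan is to extract each assertion from there and verify that the hypotheses \cref{ass} and \eqref{Heegner} are preserved under level raising. For the existence of $g_m$ and $\p_m$, I would argue by induction on the number of prime factors of $m$, adjoining one admissible prime $q$ at a time. The admissibility requirement $\p\mid(q+1)^2-a_q^2$ is exactly the congruence needed for Ribet-style level raising in the form of Diamond--Taylor \cite{DT2, DT}: at each step one obtains a newform, new at $q$, of level equal to the previous one times $q$, whose mod $\p$ representation is again $\conj{\rho}_{g,\p_0}$. Since the residual representation never changes, \eqref{res-surj} is immediate, while $Nm$ is squarefree and prime to $p$ because admissible primes avoid $NDp$ and are mutually coprime, giving \eqref{sqf} and \eqref{good}. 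For \eqref{ram}, the set $\mathrm{Ram}(\conj{\rho}_{A,\p})$ depends only on the residual representation, hence is unchanged, while the adjoined prime $q$ is inert in $K$ --- so it enters $N^-$ --- and satisfies $p\nmid q^2-1$, i.e.\ $q\not\equiv\pm1\bmod p$, so it imposes no new requirement; and $p\nmid Nm$ forces $a_p(g_m)\equiv a_p(g)\bmod\p_m$ (equality of traces of $\mathrm{Frob}_p$ on the common residual module), so \eqref{not-anom} persists. The identifications $V_{g_m}\iso V$ can be fixed once and for all, using \eqref{res-surj} to see that any two choices differ by a scalar.

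For the Selmer structure, away from $m$ the modules $V_{g_m}$ and $V$ are locally isomorphic as Galois modules, so the Bloch--Kato local conditions agree; at a prime $q\mid m$ the form $g_m$ is new at $q$, so $A_{g_m}$ has multiplicative reduction there, split over the unramified quadratic extension $K_q/\Q_q$, and one computes that the image of the local Kummer map $\Cohomology{1}{\f}{K_q}{V}$ is precisely the transverse line $\Cohomology{1}{\t}{K_q}{V}=\Cohomology{1}{}{K_q}{k_0(1)}$ in the decomposition recalled above. This is the content of the relevant lemmas of \cite[Section 4]{Wei-Zhang}, building on the recollection of \cite[Lemma 4.2]{Wei-Zhang} given earlier, and yields $\tensor*[_{g_m}]{\L}{}=\L(m)$.

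Finally, if $(g,\p)$ satisfies \eqref{Heegner} and $m\in\Admissible_g^+$, then $N^-_{g_m}=N^-m$ is squarefree with an even number of prime factors and coprime to $D$, so \eqref{Heegner} holds for $(g_m,\p_m)$; hence the Kolyvagin system $\kappa_{g_m}$ is defined, and with it the classes $c(n,m)=c_{g_m}(n)\in\Cohomology{1}{\L(m)}{K}{V}$; here Kolyvagin primes $n$ are automatically coprime to $m$, since admissible and Kolyvagin primes form disjoint sets. The isomorphism $\phi^{\f,\t}_{m,q_1,q_2}$ and the reciprocity law $\phi^{\f,\t}(\loc{q_1}(c(n,m)))=\loc{q_2}(c(n,mq_1q_2))$ are Wei Zhang's comparison of the Heegner classes on the two relevant Shimura curves --- those for the quaternion algebras ramified at $N^-m$ and at $N^-mq_1q_2$ --- carried out in \cite[Section 4]{Wei-Zhang} through the bad reduction of these curves, and I would invoke it directly.

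There is no new mathematical input beyond \cite{Wei-Zhang}; the only real obstacle is bookkeeping: checking that \emph{every} clause of \cref{ass} and of \eqref{Heegner} is genuinely stable under level raising, and that the identifications $V_{g_m}\iso V$ are chosen compatibly enough for both the equality of Selmer structures and the reciprocity law to make literal sense across all $m$.
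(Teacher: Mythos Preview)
Your proposal is correct and matches the paper's treatment: the paper does not give an independent proof of this theorem but presents it as ``a summary of some of the properties of the constructions done in \cite{Wei-Zhang},'' which is exactly what you do, only with more explicit bookkeeping on why each clause of \cref{ass} and \eqref{Heegner} survives level raising. The one point the paper adds that you omit is the remark immediately following the statement: the reciprocity law you invoke from \cite[Section 4]{Wei-Zhang} rests on \cite[Theorem 9.3]{Bertolini-Darmon}, which is stated there only in the ordinary case, and one needs \cite[Proposition 4.5]{Darmon-Iovita} to know it extends to the non-ordinary setting---worth flagging since the paper works in both regimes.
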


\begin{remark}
In the proof of the last statement, Wei Zhang uses a result of Bertolini--Darmon \cite[Theorem 9.3]{Bertolini-Darmon} stated in the ordinary setting, but, as noted already by \cite[Proposition 4.5]{Darmon-Iovita}, the proof of such result also works in the non-ordinary case.
\end{remark}

Wei Zhang then used this $m$-aspect of the classes $c(n,m)$ to reduce the proof of $\kappa_g\neq\{0\}$ to the case of when $\dim_{k_0}\Cohomology{1}{\L}{K}{V}=1,$ as in the following.
\begin{theorem}\label{reduction-rank-1}
    Consider $(g,\p)$ which satisfy both \cref{ass} and \eqref{Heegner}. Then there is $m\in\Admissible_g^+$ with $\dim_{k_0}\Cohomology{1}{\L(m)}{K}{V}=1$ such that $\kappa_{g_m}\neq\{0\}$ implies $\kappa_g\neq\{0\}.$
\end{theorem}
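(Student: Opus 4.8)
The plan is to run an induction on $r \defeq \dim_{k_0}\Cohomology{1}{\L}{K}{V}$, reducing the rank by two at a time by raising the level at two admissible primes $q_1, q_2$. The base case $r = 1$ is trivial (take $m = 1$). For the inductive step, assume $r \ge 2$; by the parity conjecture of \cref{parity} combined with the control theorem relating $\Sel_{\p^\infty}(A/K)$ to $\Cohomology{1}{\L}{K}{V}$ (via $\tensor*[_g]{\L}{} = \BK$ mod $\p$), the dimension $r$ is always odd, so in fact $r \ge 3$. First I would apply \cref{rank-lower} to find an admissible prime $q_1$ with $\dim_{k_0}\Cohomology{1}{\L(q_1)}{K}{V} = r - 1$. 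The difficulty is that $q_1$ alone changes the parity of the number of prime factors of $N^-$, so $(g_{q_1}, \p_{q_1})$ need not satisfy \eqref{Heegner}, and the Kolyvagin classes $\kappa_{g_{q_1}}$ need not be defined. This is why we must raise the level at a \emph{pair} of admissible primes.

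So next I would apply \cref{rank-lower} a second time, now to the Selmer structure $\L(q_1)$ — which by \cref{Wei-results} is exactly $\tensor*[_{g_{q_1}}]{\L}{}$ — to produce a second admissible prime $q_2$, coprime to $q_1$, with $\dim_{k_0}\Cohomology{1}{\L(q_1q_2)}{K}{V} = r - 2$. Setting $m = q_1 q_2 \in \Admissible_g^+$, the pair $(g_m, \p_m)$ satisfies \cref{ass} and \eqref{Heegner} by \cref{Wei-results}, so $\kappa_{g_m}$ is defined, and $\dim_{k_0}\Cohomology{1}{\L(m)}{K}{V} = r - 2$, which is again odd and positive (using parity again, or just that it is $\ge r - 2 \ge 1$, and an independent argument that it cannot be $0$; alternatively one iterates until reaching exactly $1$, invoking parity to know the dimension stays odd). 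Iterating this pairing $(r-1)/2$ times yields $m \in \Admissible_g^+$ with $\dim_{k_0}\Cohomology{1}{\L(m)}{K}{V} = 1$.

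It remains to show $\kappa_{g_m} \neq \{0\} \implies \kappa_g \neq \{0\}$, and this is where the last statement of \cref{Wei-results} enters. Write $m = q_1 \cdots q_{2t}$ and build it up two primes at a time through a chain $g = g_{(0)}, g_{(1)} = g_{q_1q_2}, \dots, g_{(t)} = g_m$. Suppose $c_{g_m}(n) = c(n, m) \neq 0$ for some $n \in \Kolyvagin_g$. By \cref{enough-admissible} applied to this nonzero class in $\Cohomology{1}{}{K}{V}$, there is an admissible prime $q$ (which we may take coprime to $nm$, and we may in fact have absorbed such freedom into the earlier choices so that $q = q_{2t-1}$ or $q_{2t}$ works) with $\loc{q}(c(n,m)) \neq 0$; then the isomorphism $\phi^{\f,\t}_{m', q', q}$ of \cref{Wei-results} transports this to $\loc{q'}(c(n, m' q' q)) \neq 0$ where $m = m' q' q$, forcing $c(n, m') \neq 0$, i.e. $\kappa_{g_{m'}} \neq \{0\}$ with $m'$ having two fewer prime factors. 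Descending the chain gives $\kappa_g \neq \{0\}$. The main obstacle is bookkeeping: one must arrange the admissible primes used for the Selmer-rank descent and those used for the nonvanishing-descent to be compatible (all distinct, all coprime to the relevant $n$'s and to $NDp$), which is possible because each application of \cref{enough-admissible} and \cref{rank-lower} gives a positive density of valid primes, so finitely many constraints can always be met; the cleanest packaging is to fix a large auxiliary set and choose greedily.
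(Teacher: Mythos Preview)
Your overall scheme—induct on the Selmer rank, reduce by two via a pair of admissible primes, and use \cref{parity} to know the rank stays odd—matches the structure the paper invokes from \cite[Theorem 9.1]{Wei-Zhang}. But the descent step ``$\kappa_{g_m}\neq\{0\}\implies\kappa_g\neq\{0\}$'' as you have written it has a real gap.

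The problem is this: if $q_1,q_2$ are both chosen via \cref{rank-lower} so that each \emph{decreases} the Selmer rank, then \cref{parity-lemma} forces
\[
\Cohomology{1}{\L(q_1q_2)}{K}{V}=\Cohomology{1}{\L(q_1)_{q_2}}{K}{V}\subseteq\Cohomology{1}{\L(q_1)}{K}{V}=\Cohomology{1}{\L_{q_1}}{K}{V}.
\]
Thus every class in $\Cohomology{1}{\L(q_1q_2)}{K}{V}$ satisfies $\loc_{q_1}=\loc_{q_2}=0$, and the relation $\phi^{\f,\t}(\loc_{q_1}(c(n,m')))=\loc_{q_2}(c(n,m'q_1q_2))$ collapses to $0=0$: it tells you nothing about $c(n,m')$. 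Your proposed fix—apply \cref{enough-admissible} to the nonzero class $c(n,m)$ and then ``absorb'' the resulting prime into the earlier choice of $q_{2t-1}$ or $q_{2t}$—is circular: $m$ (and hence the $q_i$) must be fixed \emph{before} you know which $n$ makes $c(n,m)\neq0$, and in any case the $q_i$ you selected via \cref{rank-lower} are exactly the primes on which $\loc$ vanishes identically on the relevant Selmer group, so they can never play the role of your $q$.

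The point is that the rank-lowering choice of primes and the ``nonvanishing-transport'' choice of primes are in direct conflict, not merely a bookkeeping nuisance. Compare the paper's own \cref{reduction-bdp}: there the second prime $q_2$ is chosen so that the rank goes \emph{up} (from $0$ back to $1$), which by \cref{parity-lemma} makes $\loc_{q_2}$ injective on the one-dimensional $\Cohomology{1}{\L(q_1q_2)}{K}{V}$, and only then does the $\phi^{\f,\t}$ relation become informative. The argument in \cite[Theorem~9.1]{Wei-Zhang} that the paper cites is organized around this kind of constraint, and does not separate the rank descent from the nonvanishing descent the way you propose.
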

\begin{proof}
    Once we know that $\dim_{k_0}\Cohomology{1}{\L}{K}{V}$ is necessarily odd, this follows as in the proof of \cite[Theorem 9.1]{Wei-Zhang}.
    
    Now note that $\dim_{k_0}\Cohomology{1}{\L}{K}{V}$ being odd follows directly from \cref{parity}: we have
    \begin{equation}
        \dim_{k_0}\Cohomology{1}{\L}{K}{V}=\dim_k\Sel_\p(A/K)
    \end{equation}
    since $\L_v\otimes_{k_0}k=\Cohomology{1}{\f}{K}{V_k}$ by \cite[Theorem 5.2]{Wei-Zhang}, and we have
    \begin{equation}
        \dim_k\Sel_\p(A/K)\equiv\corank_{\O_\p}\Sel_{\p^\infty}(A/K)\mod2
    \end{equation}
    since $\Sel_\p(A/K)=\Sel_{\p^\infty}(A/K)[\p]$ and since the Cassels--Tate pairing on the indivisible quotient of $\Sha(A/K)$ is non-degenerate.
\end{proof}

\subsection{The rank one case}
Now we prove \cref{Theorem-A} in the case when $\dim_{k_0}\Cohomology{1}{\L}{K}{V}=1.$
For this section, fix a prime $v$ of $K$ with $v\mid p,$ so that $p\O_K=v\conj{v}.$ Also fix a pair $(g,\p)$ satisfying \cref{ass}, \eqref{Heegner} and such that $\dim_{k_0}\Cohomology{1}{\L}{K}{V}=1.$

We first show that we can reduce the problem to proving that $c(1,1)\neq0$ when $\Cohomology{1}{\L_v^{\conj{v}}}{K}{V}=0.$

\begin{proposition}\label{suffices-s}
    We have
    \begin{equation}
        \Cohomology{1}{\L_v^{\conj{v}}}{K}{V}=0\iff
        \Cohomology{1}{\L_v}{K}{V}=0.
    \end{equation}
\end{proposition}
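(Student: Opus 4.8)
The plan is to prove the equivalence by combining the self-duality computation at places above $p$ with \Cref{relaxed-parity-lemma}. The direction $\Cohomology{1}{\L_v^{\conj{v}}}{K}{V}=0\implies\Cohomology{1}{\L_v}{K}{V}=0$ is immediate from the inclusion $\Cohomology{1}{\L_v}{K}{V}\subseteq\Cohomology{1}{\L_v^{\conj{v}}}{K}{V}$ (relaxing at $\conj{v}$ can only enlarge the Selmer group), so the content is in the reverse direction. Assume $\Cohomology{1}{\L_v}{K}{V}=0$; I want to deduce that relaxing additionally at $\conj{v}$ does not create any new classes.

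First I would invoke \Cref{Poitou-Tate} to compare the strict-at-$v$ and relaxed-at-$\conj{v}$ structures. Applying the corollary to $\L_v$ versus $\L_v^{\conj{v}}$ (these differ only at the place $\conj{v}$), one gets
\begin{equation*}
\frac{\abs{\Cohomology{1}{\L_v^{\conj{v}}}{K}{V}}}{\abs{\Cohomology{1}{(\L_v^{\conj{v}})^*}{K}{V}}}=\abs{k_0}\cdot\sqrt{\abs{\Cohomology{1}{}{K_{\conj{v}}}{V}}}=\abs{k_0}\cdot\abs{k_0}=(\abs{k_0})^2,
\end{equation*}
where I used the dual of $\L_v^{\conj{v}}$ is $\L^v_{\conj{v}}$ (strict at $\conj{v}$, relaxed at $v$ — with $v,\conj{v}$ swapping roles under the involution on the set of places, reflecting that $\Cohomology{1}{\f}{K_w}{V}$ is self-orthogonal), together with $\abs{\Cohomology{1}{\f}{K_w}{V}}=\abs{k_0}$ from \eqref{f-is-one-dim} and $\abs{\Cohomology{1}{}{K_w}{V}}=(\abs{k_0})^2$ for $w\mid p$. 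So $\Cohomology{1}{\L_v^{\conj{v}}}{K}{V}$ and $\Cohomology{1}{\L^v_{\conj{v}}}{K}{V}$ differ in $k_0$-dimension by exactly $2$. Symmetrically, $\Cohomology{1}{\L_v}{K}{V}$ and $\Cohomology{1}{\L^v}{K}{V}$: here relaxing at $v$ changes the dimension, and the same corollary gives $\abs{\Cohomology{1}{\L^v}{K}{V}}/\abs{\Cohomology{1}{\L_{v,\conj{v}}}{K}{V}}$ in terms of the local factor at $v$, yielding $\Cohomology{1}{\L^v}{K}{V}$ has dimension $1$ (since $\Cohomology{1}{\L_v}{K}{V}=0$ and the local factor at $v$ contributes $\abs{k_0}$).

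Now the key is to chain these relations so that the assumption $\Cohomology{1}{\L_v}{K}{V}=0$ forces $\Cohomology{1}{\L_v^{\conj{v}}}{K}{V}=0$. Since $\Cohomology{1}{\L_{v,\conj{v}}}{K}{V}\subseteq\Cohomology{1}{\L_v}{K}{V}=0$, we get $\Cohomology{1}{\L_{v,\conj{v}}}{K}{V}=0$, hence $\dim\Cohomology{1}{\L^v_{\conj{v}}}{K}{V}=\dim\Cohomology{1}{\L_v^{\conj{v}}}{K}{V}-2$ combined with $\Cohomology{1}{\L^v_{\conj{v}}}{K}{V}\subseteq\Cohomology{1}{\L^v}{K}{V}$ which has dimension $\le 1$: then $\dim\Cohomology{1}{\L_v^{\conj{v}}}{K}{V}\le 3$, which is not yet enough. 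I expect the sharp input is the observation that $\Cohomology{1}{\L_v^{\conj{v}}}{K}{V}$ is strict at $v$, so \emph{every} class in it that is also in $\Cohomology{1}{\L^v}{K}{V}$ must land in $\Cohomology{1}{\L_{v}^{\conj v}}{K}{V}\cap\Cohomology{1}{\L^v}{K}{V}=\Cohomology{1}{\L^v_{\conj v}}{K}{V}=0$... wait — more carefully, one should run \Cref{relaxed-parity-lemma}-style bookkeeping directly: the cleanest route is to apply \Cref{Poitou-Tate} once more to $\L_v^{\conj v}$ versus $\L_{v,\conj v}$ (differ at $v$) and once to $\L^v_{\conj v}$ versus $\L_{v,\conj v}$ (differ at $v$), getting both $\dim\Cohomology{1}{\L_v^{\conj v}}{K}{V}$ and $\dim\Cohomology{1}{\L^v_{\conj v}}{K}{V}$ pinned in terms of $\dim\Cohomology{1}{\L_{v,\conj v}}{K}{V}=0$ plus the local factor $\abs{k_0}$ at $v$; since $\Cohomology{1}{\L_{v,\conj v}}{K}{V}\subseteq\Cohomology{1}{\L_v^{\conj v}}{K}{V}$ and the Euler-characteristic count gives dimension $0$ on the nose, we conclude $\Cohomology{1}{\L_v^{\conj v}}{K}{V}=0$.

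The main obstacle is keeping the Poitou–Tate quotients straight: which local factor ($\abs{k_0}$ versus $1/\abs{k_0}$) appears when passing from strict to relaxed at $v$ versus at $\conj v$, and correctly identifying the dual of each modified Selmer structure under the involution $w\mapsto\conj w$. Once the local computation $\abs{\Cohomology{1}{\f}{K_w}{V}}=\abs{k_0}$, $\abs{\Cohomology{1}{}{K_w}{V}}=(\abs{k_0})^2$ at $w\mid p$ is in hand from \eqref{f-is-one-dim}, the global statement is forced by a short dimension count, using only inclusions of Selmer groups and \Cref{Poitou-Tate}; no Chebotarev or level-raising input is needed here.
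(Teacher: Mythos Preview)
Your proposal has a genuine gap: you never use that $A$ is defined over $\Q$, and without this the argument cannot close. The crucial step in the paper's proof is that complex conjugation swaps $v$ and $\conj v$, so $\Cohomology{1}{\L_v}{K}{V}=0$ immediately forces $\Cohomology{1}{\L_{\conj v}}{K}{V}=0$. From there, \Cref{Poitou-Tate} gives $\abs{\Cohomology{1}{\L^{\conj v}}{K}{V}}=\abs{k_0}=\abs{\Cohomology{1}{\L}{K}{V}}$ (using the ambient rank-one hypothesis $\dim_{k_0}\Cohomology{1}{\L}{K}{V}=1$, which you also never invoke), hence $\Cohomology{1}{\L}{K}{V}=\Cohomology{1}{\L^{\conj v}}{K}{V}$, and intersecting with the strict-at-$v$ condition yields $\Cohomology{1}{\L_v}{K}{V}=\Cohomology{1}{\L_v^{\conj v}}{K}{V}=0$. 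Poitou--Tate by itself, with no symmetry input linking $v$ and $\conj v$, does not rule out the scenario where the unique class in $\Cohomology{1}{\L}{K}{V}$ has nonzero $\loc_v$ but zero $\loc_{\conj v}$; in that scenario all of your dimension counts are satisfied and yet $\dim_{k_0}\Cohomology{1}{\L_v^{\conj v}}{K}{V}=1$.

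There are also two concrete errors in the bookkeeping. First, your displayed computation of $\abs{\Cohomology{1}{\L_v^{\conj v}}{K}{V}}/\abs{\Cohomology{1}{\L^v_{\conj v}}{K}{V}}$ is wrong: the local factor at $v$ (strict, contributing $1/\abs{k_0}$) exactly cancels the factor at $\conj v$ (relaxed, contributing $\abs{k_0}$), so the quotient is $1$, not $(\abs{k_0})^2$. Second, your ``cleanest route'' at the end misapplies \Cref{Poitou-Tate}: that corollary compares $\Cohomology{1}{\L}{K}{V}$ with $\Cohomology{1}{\L^*}{K}{V}$, not with a Selmer group for a substructure of $\L$. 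Applying it to $\L_{v,\conj v}$ relates $\Cohomology{1}{\L_{v,\conj v}}{K}{V}$ to $\Cohomology{1}{\L^{v,\conj v}}{K}{V}$, and applying it to $\L_v^{\conj v}$ only tells you $\dim\Cohomology{1}{\L_v^{\conj v}}{K}{V}=\dim\Cohomology{1}{\L^v_{\conj v}}{K}{V}$; neither pins down $\dim\Cohomology{1}{\L_v^{\conj v}}{K}{V}$ ``on the nose'' from $\Cohomology{1}{\L_{v,\conj v}}{K}{V}=0$.
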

\begin{proof}
    As $\Cohomology{1}{\L_v}{K}{V}\subseteq\Cohomology{1}{\L_v^{\conj{v}}}{K}{V},$ the forward implication is clear.
    
    Now assume that $\Cohomology{1}{\L_v}{K}{V}=0$. Then we would also have $\Cohomology{1}{\L_{\conj{v}}}{K}{V}=0,$ since $A$ is defined over $\Q.$
    
    Then \cref{Poitou-Tate}, together with \eqref{f-is-one-dim}, says that $\abs{\Cohomology{1}{\L^{\conj{v}}}{K}{V}}=\abs{\Cohomology{1}{\f}{K_{\conj{v}}}{V}}=\abs{k_0},$ which is also $\abs{\Cohomology{1}{\L}{K}{V}}$ by assumption. Hence $\Cohomology{1}{\L}{K}{V}=\Cohomology{1}{\L^{\conj{v}}}{K}{V}.$ Intersecting the last equality with $\Cohomology{1}{\L_v}{K}{V}$ gives us
    \begin{equation}
        \Cohomology{1}{\L_v}{K}{V}=\Cohomology{1}{\L_v^{\conj{v}}}{K}{V},
    \end{equation}
    and hence $\Cohomology{1}{\L_v^{\conj{v}}}{K}{V}$ is also $0.$
\end{proof}

\begin{lemma}\label{key-lemma}
    Suppose $q_1\in\Admissible_g$ is such that $\Cohomology{1}{\L(q_1)}{K}{V}=0.$ Then there is a positive density of primes $q_2\in\Admissible_g(q_1)$ such that $\Cohomology{1}{\L_v^{\conj{v}}(q_1q_2)}{K}{V}=0.$
\end{lemma}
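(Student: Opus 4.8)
We are given $q_1 \in \Admissible_g$ with $\Cohomology{1}{\L(q_1)}{K}{V} = 0$, and we want a positive density of $q_2 \in \Admissible_g(q_1)$ making the BDP-type Selmer group $\Cohomology{1}{\L_v^{\conj{v}}(q_1q_2)}{K}{V}$ vanish. The natural strategy is to first pass from $\L(q_1)$ to the BDP condition at $p$, and then use a single level-raising step at $q_2$ to kill whatever one-dimensional obstruction appears. Concretely: set $\L' = \L(q_1)$, so $\Cohomology{1}{\L'}{K}{V} = 0$. By \cref{suffices-s} applied with $g$ replaced by $g_{q_1}$ (legitimate since $(g_{q_1},\p_{q_1})$ satisfies \cref{ass} and $\tensor*[_{g_{q_1}}]{\L}{} = \L(q_1)$ by \cref{Wei-results}), we get $\Cohomology{1}{\L'^{\,\conj{v}}_v}{K}{V} = 0$ as well, i.e. $\Cohomology{1}{\L_v^{\conj{v}}(q_1)}{K}{V} = 0$. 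So the problem reduces to: given that $\Cohomology{1}{\L_v^{\conj{v}}(q_1)}{K}{V} = 0$, find $q_2$ with $\Cohomology{1}{\L_v^{\conj{v}}(q_1q_2)}{K}{V} = 0$ — but of course adding a transverse condition at $q_2$ can only shrink or preserve the group, so this direction is automatic once one checks the transverse condition at $q_2$ is compatible; the real content must be that we are free to choose $q_2$ so that the relevant Kolyvagin/Heegner class at level $Nq_1q_2$ is controlled.

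Wait — more carefully: the point of the lemma is surely that we want $\Cohomology{1}{\L_v^{\conj{v}}(q_1q_2)}{K}{V} = 0$ with $q_1q_2$ having an even number of prime factors relative to the original parity (so that $(g_{q_1q_2}, \p_{q_1q_2})$ lands in the Heegner situation $N'^- > 1$ needed for the BDP main conjecture in \cref{Theorem-A}), and the transverse condition at $q_2$ is what the level-raised form actually carries. So the plan is: first, from $\Cohomology{1}{\L(q_1)}{K}{V} = 0$, deduce via \cref{suffices-s} (at level $q_1$) that $\Cohomology{1}{\L_v^{\conj{v}}(q_1)}{K}{V} = 0$; this already has dimension $0$. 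Now I want to choose $q_2$ admissible, coprime to $q_1$, so that passing to the Selmer structure $\L_v^{\conj{v}}(q_1q_2)$ — which differs from $\L_v^{\conj{v}}(q_1)$ only by imposing the transverse condition $\Cohomology{1}{\t}{K_{q_2}}{V}$ in place of the finite condition at $q_2$ — still gives $0$. Since $\Cohomology{1}{\L_v^{\conj{v}}(q_1)}{K}{V} = 0$ already, for \emph{any} admissible $q_2$ the group $\Cohomology{1}{\L_v^{\conj{v}}(q_1)_{q_2}}{K}{V}$ (strict at $q_2$) is $0$, and by the parity lemma machinery (\cref{relaxed-parity-lemma} applied with the relaxed places being $\conj v$ and the $R$-bookkeeping adjusted, or directly \cref{parity-lemma}) the group $\Cohomology{1}{\L_v^{\conj{v}}(q_1)^{q_2}}{K}{V}$ relaxed at $q_2$ has dimension exactly $1$, while $\Cohomology{1}{\L_v^{\conj{v}}(q_1q_2)}{K}{V}$ with the transverse condition is sandwiched between these two, hence has dimension $0$ or $1$. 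So the task is to rule out dimension $1$: equivalently, to choose $q_2$ so that the nonzero class in the relaxed-at-$q_2$ group does \emph{not} lie in the transverse subspace at $q_2$.

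The key tool for that is \cref{nonzero-localization}: applied to the place $v' = q_2$ and the Selmer structure $\L_v^{\conj{v}}(q_1)$ (which has $\Cohomology{1}{}{K}{V}$-part zero), it would say — once we know $\Cohomology{1}{K_{q_2}}{V} \ne 0$, which holds for admissible $q_2$ — that there is a positive density of admissible $q_2$ with $\loc{q_2}(\Cohomology{1}{(\L_v^{\conj{v}}(q_1))(q_2)}{K}{V}) \ne 0$. But that is circular as stated since $q_2$ appears on both sides. The correct application is the dual/symmetric one: I instead apply \cref{enough-admissible} to the nonzero class generating $\Cohomology{1}{\L_v^{\conj{v}}(q_1)^{q_2\text{-relaxed at a dummy place}}}{}{}$ — more precisely, one runs the argument of \cref{nonzero-localization} with the roles reversed. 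Let me restate: by \cref{Poitou-Tate}, $\abs{\Cohomology{1}{\L_v^{\conj{v}}(q_1)^{q_2}}{K}{V}} = \abs{\Cohomology{1}{\t}{K_{q_2}}{V}} = \abs{k_0}$ is impossible to compute before fixing $q_2$; rather, the clean route is to pick $q_2$ from \cref{enough-admissible} applied to a fixed nonzero class $c$ in a Selmer group \emph{not} involving $q_2$. So: consider $\Cohomology{1}{\L_v^{\conj{v}}(q_1)^w}{K}{V}$ — no.

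**The honest plan.** Here is the structure I would actually write: (1) Reduce to $\Cohomology{1}{\L_v^{\conj{v}}(q_1)}{K}{V} = 0$ using \cref{suffices-s} at level $q_1$. (2) By \cref{Poitou-Tate} and \eqref{f-is-one-dim}, compute $\abs{\Cohomology{1}{(\L_v^{\conj{v}})^{v}(q_1)}{K}{V}} = \abs{\Cohomology{1}{\f}{K_v}{V}} = \abs{k_0}$, so the group relaxed at $v$ is one-dimensional over $k_0$; call its generator $c$. (3) Apply \cref{enough-admissible} to $c$: there is a positive density of admissible $q_2$, which we may take coprime to $q_1$ (discarding the density-zero set dividing $q_1$), with $\loc{q_2}(c) \ne 0$; moreover by \cref{nonzero-localization}-type reasoning we may also arrange $\loc{q_2}$ is nonzero on the relevant module so that $c$ fails to satisfy the transverse condition at $q_2$ — this uses that the image $\loc{q_2}(c)$, being a nonzero line in the one-dimensional $\Cohomology{1}{\f}{K_{q_2}}{V}$, is disjoint from $\Cohomology{1}{\t}{K_{q_2}}{V}$. (4) Now run the parity lemma \cref{relaxed-parity-lemma} (or \cref{parity-lemma}) with base structure $\L_v^{\conj{v}}(q_1)$ and the prime $q_2$: since the relaxed-at-$q_2$ group has dimension $\le 1$ and its generator does not lie in the transverse subspace, the transverse-at-$q_2$ group $\Cohomology{1}{\L_v^{\conj{v}}(q_1q_2)}{K}{V}$ must be strictly smaller, hence $0$. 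The \textbf{main obstacle} is step (3)–(4): verifying that "nonzero localization at $q_2$" genuinely forces the transverse condition to fail and then feeding that correctly into the relaxed parity lemma (which needs the strictness-at-$v$, $\conj v$ hypotheses to be checked for both the base and the $q_2$-augmented structure) — this is exactly the delicate Chebotarev-plus-Poitou–Tate bookkeeping, and I expect it to require the full strength of \cref{relaxed-parity-lemma} rather than just \cref{parity-lemma}, because the relaxed place $\conj v$ (and strict place $v$) are present throughout.
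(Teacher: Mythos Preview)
Your proposal has a genuine gap at step~(1) of the ``honest plan'', and the paper's proof avoids this entirely by a simpler route.

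\textbf{The gap.} You want to apply \cref{suffices-s} at level $q_1$ to deduce $\Cohomology{1}{\L_v^{\conj{v}}(q_1)}{K}{V}=0$ from $\Cohomology{1}{\L_v(q_1)}{K}{V}=0$. But \cref{suffices-s} is stated (and proved) under the standing hypothesis of \S4.2 that $\dim_{k_0}\Cohomology{1}{\L}{K}{V}=1$, and its proof genuinely uses this: the step ``$\abs{\Cohomology{1}{\L^{\conj{v}}}{K}{V}}=\abs{k_0}$, which is also $\abs{\Cohomology{1}{\L}{K}{V}}$ by assumption'' fails when the latter group is zero. For $g_{q_1}$ we have $\dim_{k_0}\Cohomology{1}{\L(q_1)}{K}{V}=0$, so the proposition does not apply. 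In fact one computes via \cref{Poitou-Tate} that $\Cohomology{1}{\L^{\conj{v}}(q_1)}{K}{V}$ is one-dimensional, and whether its generator has vanishing or nonvanishing localization at $v$ is not determined by the hypotheses; so $\Cohomology{1}{\L_v^{\conj{v}}(q_1)}{K}{V}$ could a priori be zero or one-dimensional. Your subsequent steps (2)--(4) are built on this unestablished claim, and the later bookkeeping (e.g.\ invoking \cref{relaxed-parity-lemma} with relaxed/strict conditions at $v,\conj v$) never recovers from it.

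\textbf{What the paper does instead.} The paper imposes the BDP condition \emph{after} choosing $q_2$, not before. Starting from $\Cohomology{1}{\L(q_1)}{K}{V}=0$, it applies \cref{nonzero-localization} directly with the place $v$ (not with a yet-to-be-chosen $q_2$): this yields a positive density of admissible $q_2$ with $\loc_v\bigl(\Cohomology{1}{\L(q_1q_2)}{K}{V}\bigr)\neq0$. By \cref{parity-lemma} this group is one-dimensional, so $\Cohomology{1}{\L_v(q_1q_2)}{K}{V}=0$. Now $g_{q_1q_2}$ is in the rank-one situation, so \cref{suffices-s} applies legitimately and gives $\Cohomology{1}{\L_v^{\conj{v}}(q_1q_2)}{K}{V}=0$. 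The whole argument is three lines; the point you were circling around (that $q_2$ should be chosen via Chebotarev applied to a class not involving $q_2$) is exactly what \cref{nonzero-localization} packages, and the relevant place is $v$, not some auxiliary place.
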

\begin{proof}
    By \cref{nonzero-localization}, we can choose $q_2$ such that $\loc{v}(\Cohomology{1}{\L(q_1q_2)}{K}{V})\neq0.$ Since \cref{parity-lemma} gives us that $\dim_{k_0}\Cohomology{1}{\L(q_1q_2)}{K}{V}=1,$ this means that $\Cohomology{1}{\L_v(q_1q_2)}{K}{V}=0.$ Together with \cref{suffices-s}, this gives us that $\Cohomology{1}{\L_v^{\conj{v}}(q_1q_2)}{K}{V}=0.$
\end{proof}

\begin{theorem}\label{reduction-bdp}
    There exist two primes $q_1,q_2\in\Admissible_g$ such that both $\Cohomology{1}{\L_v^{\conj{v}}(q_1q_2)}{K}{V}=0$ and $\dim_{k_0}\Cohomology{1}{\L(q_1q_2)}{K}{V}=1.$ Moreover, if $c(1,q_1q_2)\neq0,$ then also $c(1,1)\neq0.$
\end{theorem}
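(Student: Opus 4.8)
The plan is to combine the level-raising dictionary from \cref{Wei-results} with the Selmer-group bookkeeping developed in \Cref{cohomology-section}. First I would produce the admissible primes $q_1, q_2$. Since $(g,\p)$ satisfies \eqref{Heegner} and $\dim_{k_0}\Cohomology{1}{\L}{K}{V}=1>0$, \cref{rank-lower} supplies a positive density of admissible primes $q_1\in\Admissible_g$ with $\dim_{k_0}\Cohomology{1}{\L(q_1)}{K}{V}=0$; among these I may further require $q_1\in\Admissible_g^+$ — wait, a single prime is odd, so instead I should arrange $q_1q_2\in\Admissible_g^+$. So: pick $q_1$ from \cref{rank-lower} so that $\Cohomology{1}{\L(q_1)}{K}{V}=0$; then feed $q_1$ into \cref{key-lemma} to get a positive density of $q_2\in\Admissible_g(q_1)$ with $\Cohomology{1}{\L_v^{\conj v}(q_1q_2)}{K}{V}=0$. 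Applying \cref{Poitou-Tate} (or directly \cref{parity-lemma} starting from $\Cohomology{1}{\L(q_1)}{K}{V}=0$) to the pair $(q_1,q_2)$ shows $\dim_{k_0}\Cohomology{1}{\L(q_1q_2)}{K}{V}=1$, giving the first two assertions. Since both densities are positive, I can in addition insist that $q_1q_2$ has an even number of prime factors, i.e.\ $q_1q_2\in\Admissible_g^+$, which is needed for the Euler-system compatibility to apply.

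Next I would run the level-raising comparison for the classes. By \cref{Wei-results}, since $q_1q_2\in\Admissible_g^+$ and $(g,\p)$ satisfies \eqref{Heegner}, the form $g_{q_1q_2}$ again satisfies \cref{ass} and \eqref{Heegner}, and for the intermediate level $g_{q_1}$ — here I should be careful: \cref{Wei-results} only defines the classes $c(n,m)$ for $m\in\Admissible_g^+$, so I cannot directly speak of $c(1,q_1)$. Instead I would use the compatibility isomorphism $\phi^{\f,\t}=\phi^{\f,\t}_{1,q_1,q_2}\colon \Cohomology{1}{\f}{K_{q_1}}{V}\rightiso\Cohomology{1}{\t}{K_{q_2}}{V}$ of \cref{Wei-results} with $m=1$, which gives, for all $n\in\Kolyvagin_g$ and in particular $n=1$,
\begin{equation*}
    \phi^{\f,\t}\big(\loc{q_1}(c(1,1))\big)=\loc{q_2}\big(c(1,q_1q_2)\big).
\end{equation*}
Since $\phi^{\f,\t}$ is an isomorphism, $\loc{q_2}(c(1,q_1q_2))\neq 0$ forces $\loc{q_1}(c(1,1))\neq 0$, and hence $c(1,1)\neq 0$. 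So it suffices to show $c(1,q_1q_2)\neq 0\implies \loc{q_2}(c(1,q_1q_2))\neq 0$.

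The remaining point — and the part requiring the Selmer-theoretic input — is precisely that $c(1,q_1q_2)$, viewed as an element of $\Cohomology{1}{\L(q_1q_2)}{K}{V}$ which is one-dimensional over $k_0$, has nonzero localization at $q_2$. The chain of equalities established above gives $\Cohomology{1}{\L_v(q_1q_2)}{K}{V}=0$ inside the one-dimensional space $\Cohomology{1}{\L(q_1q_2)}{K}{V}$, hence the localization map $\loc{v}$ is injective on $\Cohomology{1}{\L(q_1q_2)}{K}{V}$; one wants the analogous statement at $q_2$ rather than at $v$. By \cref{parity-lemma} applied at $q_2$ to the Selmer structure $\L(q_1)$ (whose Selmer group vanishes), $\Cohomology{1}{\L(q_1)^{q_2}}{K}{V}$ and $\Cohomology{1}{\L(q_1)_{q_2}}{K}{V}$ are $\Cohomology{1}{\L(q_1)}{K}{V}=0$ and $\Cohomology{1}{\L(q_1q_2)}{K}{V}$ in some order; since the latter is one-dimensional and $\L(q_1)_{q_2}$ is strict at $q_2$, we get $\Cohomology{1}{\L(q_1q_2)_{q_2}}{K}{V}=0$ unless $\Cohomology{1}{\L(q_1q_2)}{K}{V}=\Cohomology{1}{\L(q_1)_{q_2}}{K}{V}$, and the parity count of dimensions excludes the exceptional case. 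Thus $\loc{q_2}$ is injective on the one-dimensional $\Cohomology{1}{\L(q_1q_2)}{K}{V}$, so any nonzero class — in particular $c(1,q_1q_2)$ if it is nonzero — localizes nontrivially at $q_2$. I expect the main obstacle to be the careful bookkeeping here: tracking which of the two alternatives in \cref{parity-lemma}/\cref{relaxed-parity-lemma} occurs at each of $v$, $\conj v$, $q_1$, $q_2$ and matching the dimension counts, while also arranging all the positive-density choices to be made compatibly (including $q_1q_2\in\Admissible_g^+$ and $q_2\in\Admissible_g(q_1)$) so that \cref{Wei-results} genuinely applies.
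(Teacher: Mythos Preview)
Your proposal is correct and follows essentially the same route as the paper: choose $q_1$ via \cref{rank-lower} so that $\Cohomology{1}{\L(q_1)}{K}{V}=0$, then $q_2$ via \cref{key-lemma}, read off $\dim_{k_0}\Cohomology{1}{\L(q_1q_2)}{K}{V}=1$ from \cref{parity-lemma}, and use the injectivity of $\loc{q_2}$ on $\Cohomology{1}{\L(q_1q_2)}{K}{V}$ (which, as you argue, is forced by the dichotomy in \cref{parity-lemma} applied to $\L(q_1)$ at $q_2$) together with the isomorphism $\phi^{\f,\t}_{1,q_1,q_2}$ from \cref{Wei-results} to transport nonvanishing from $c(1,q_1q_2)$ back to $c(1,1)$. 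One small point: since $q_1q_2$ is a product of exactly two primes it lies in $\Admissible_g^+$ automatically, so no additional density argument is needed there.
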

\begin{proof}
    By \cref{parity-lemma} and \cref{nonzero-localization}, we can choose $q_1$ such that $\Cohomology{1}{\L(q_1)}{K}{V}=0.$ Then, by \cref{key-lemma}, we can choose $q_2$ with $\Cohomology{1}{\L_v^{\conj{v}}(q_1q_2)}{K}{V}=0.$ Note that $\dim_{k_0}\Cohomology{1}{\L(q_1q_2)}{K}{V}=1$ is automatic by \cref{parity-lemma}.
    
    Since $\Cohomology{1}{\L(q_1)}{K}{V}=0$ but $\Cohomology{1}{\L(q_1q_2)}{K}{V}\neq0,$ by \cref{parity-lemma} we must have
    \begin{equation}
        0=\Cohomology{1}{\L(q_1)}{K}{V}=\Cohomology{1}{\L_{q_2}(q_1)}{K}{V}
    \end{equation}
    and hence we have an injection
    \begin{equation}
        \loc{q_2}\colon\Cohomology{1}{\L(q_1q_2)}{K}{V}\hookrightarrow\Cohomology{1}{}{K_{q_2}}{V}.
    \end{equation}
    
    Now if we have $c(1,q_1q_2)\neq0,$ this means that $\loc{q_2}(c(1,q_1q_2))\neq0.$ Since we have
    \begin{equation}
        \loc{q_2}(c(1,q_1q_2))=\phi^{\f,\t}(\loc{q_1}(c(1,1)))
    \end{equation}
    by \cref{Wei-results}, where $\phi^{\f,\t}$ is an isomorphism, we conclude that $c(1,1)\neq0.$
\end{proof}

Now \cref{Theorem-A} is reduced to proving the following.

\begin{theorem}\label{proof-bdp}
    Assume in addition that $\Cohomology{1}{\L_v^{\conj{v}}}{K}{V}=0.$ Assume that the BDP main conjecture \cite[Conjecture 6.1.2]{JSW} holds true for $g.$ Then $c(1,1)\neq0.$
\end{theorem}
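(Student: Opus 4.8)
The strategy is to specialize the BDP main conjecture at the trivial anticyclotomic character and unwind the resulting equality of $p$-adic valuations into a statement about the Heegner point $P\in A(K)$, and then to translate $p$-indivisibility of $P$ into $c(1,1)\neq 0$. The BDP main conjecture \cite[Conjecture 6.1.2]{JSW} equates the characteristic ideal of the relevant BDP Selmer group (the one with the strict condition at $v$ and the relaxed condition at $\conj v$, away from $p$ the usual Bloch--Kato condition) with the ideal generated by the BDP $p$-adic $L$-function $\mathscr{L}_{\mathrm{BDP}}$. First I would invoke the anticyclotomic control theorem of \cite[Section 3.3]{JSW} to descend this Iwasawa-theoretic statement to the trivial character: the characteristic ideal specializes, up to a controlled error term, to the order of the BDP Selmer group over $K$ itself, which is governed by $\Cohomology{1}{\L_v^{\conj v}}{K}{A[\p^\infty]}$-type data, and crucially the hypothesis $\Cohomology{1}{\L_v^{\conj v}}{K}{V}=0$ (together with \eqref{no-local-tor} and \eqref{not-anom}) forces the integral BDP Selmer group to vanish, so its "size" contributes nothing. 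Next, on the analytic side, the BDP formula of \cite{Brooks} (in the form of \cite[Proposition 5.1.7]{JSW}, which is where \eqref{sqf} is used) expresses the value of $\mathscr{L}_{\mathrm{BDP}}$ at the trivial character as (a unit times) the square of the $p$-adic formal-group logarithm $\log_{\omega_A}(P)$ of the Heegner point. Combining these two inputs yields a formula — call it \eqref{bdp-form} in the notation the paper anticipates — asserting that $\log_{\omega_A}(P)$ is a $\p$-adic unit, i.e. $v_\p(\log_{\omega_A}(P))=0$.

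From there the argument is local-to-global. Since $K_w\iso\Q_p$ for $w\mid p$ by \eqref{split}, the formal group logarithm identifies a finite-index subgroup of $A(K_w)$ with $\O_\p$, and \eqref{not-anom} guarantees $A[\p^\infty](K_w)=0$, so in fact $\log_{\omega_A}$ is injective on $A(K_w)\otimes\O_\p$ and its image is exactly $\O_\p$ up to the normalization already built into the BDP formula; hence $\log_{\omega_A}(P)$ being a unit means that $P$ is not divisible by $\p$ in $A(K_w)\otimes\O_\p$, and a fortiori $P$ is not $\p$-divisible in $A(K)\otimes\O_\p$. Now one runs the standard Kummer-map comparison: $c(1,1)=c_g(1)$ is, by the construction in \cite[Section 3.7]{Wei-Zhang}, the image of $P$ under the composite $A(K)\to A(K)\otimes k_0\to \Cohomology{1}{}{K}{V_g}$ induced by the Kummer sequence, and the kernel of $A(K)\otimes\O_\p\to \Cohomology{1}{}{K}{T_\p A}$ is torsion, so $P$ being non-$\p$-divisible (modulo torsion, which is controlled by \eqref{no-local-tor}/\eqref{res-surj}) is exactly equivalent to $c_g(1)\neq 0$ in $\Cohomology{1}{}{K}{V_g}$. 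This gives $c(1,1)\neq 0$, as desired.

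The main obstacle I anticipate is the bookkeeping in the control-theorem step: one must check that the error terms in descending the BDP main conjecture from $\Lambda$-modules to the trivial character are genuinely trivial — not merely finite — under our hypotheses, so that the equality of valuations is an \emph{equality} rather than an inequality, and in particular that the vanishing $\Cohomology{1}{\L_v^{\conj v}}{K}{V}=0$ propagates correctly to the integral/divisible Selmer groups that appear in \cite[Proposition 3.3.7]{JSW}. This is where \eqref{not-anom} and \eqref{no-local-tor} do real work: they kill the local $H^0$ contributions at $v,\conj v$ that would otherwise appear as fudge factors, and they ensure the comparison between $\log_{\omega_A}(P)$ being a unit and $P$ being non-$\p$-divisible is exact. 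The remaining steps — the BDP formula and the Kummer comparison — are essentially citations, modulo matching normalizations of the differential $\omega_A$ with the $(\O,\p)$-optimal model of $A$ fixed earlier.
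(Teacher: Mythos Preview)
Your proposal is correct and follows the same route as the paper's proof: specialize the BDP main conjecture via \cite[Proposition 6.2.1]{JSW} and the BDP formula of \cite{Brooks}, kill the algebraic side using $\Cohomology{1}{\L_v^{\conj v}}{K}{V}=0$ together with \eqref{no-local-tor} and \eqref{ram}, and deduce $\p$-indivisibility of $P$ and hence $c(1,1)\neq 0$. Regarding your stated obstacle: the paper only uses (and only needs) the \emph{inequality} $2\nu_\p\bigl(\tfrac{1+p-a_p}{p}\log_{\omega_A}(P)\bigr)\le\nu_\p\bigl(\abs{\Cohomology{1}{\Fac}{K}{W}}\cdot C(W)\bigr)=0$, so exact control of the error terms is unnecessary; and the correct normalization is that $\log_{\omega_A}$ takes values in $p\O_\p$ (by \eqref{not-anom} and \eqref{good}), so the conclusion is that $\log_{\omega_A}(P)/p$, not $\log_{\omega_A}(P)$ itself, is a $\p$-adic unit.
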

\begin{proof}
    Let $W=A[\p^\infty].$ We denote by $\Cohomology{1}{\f}{K_w}{W}$ the usual Bloch--Kato local conditions. They form a Selmer structure $\BK$ such that $\Cohomology{1}{\BK}{K}{W}$ is the usual $\p$-adic Selmer group of $A.$ We also denote by $\Fac$ the Selmer structure such that
    \begin{equation}
        \Cohomology{1}{\Fac}{K_w}{W}=\left\{\begin{array}{ll} \Cohomology{1}{}{K_{\conj{v}}}{W}_{\mathrm{div}}&\text{if }w=\conj{v},\\
        0&\text{otherwise},
        \end{array}\right.
    \end{equation}
    as in \cite[Section 2.2.2]{JSW}.
    
    We will use the results of \cite[Section 3]{JSW}, and so we will need to prove the following two hypothesis:
    \begin{equation}[corank-1]\tag{corank 1}
        \Cohomology{1}{\BK}{K}{W}_\div\iso F_\p/\O_\p\text{ and }\Cohomology{1}{\f}{K_w}{W}\iso F_\p/\O_\p, \text{ for }w\mid p,
    \end{equation}
    \begin{equation}[sur]\tag{sur}
        \Cohomology{1}{\BK}{K}{W}\xtwoheadrightarrow{\loc{w}}\Cohomology{1}{\f}{K_w}{W},\text{ for } w\mid p.
    \end{equation}
    
    We already noted in the proof of \cref{reduction-rank-1} that $\corank_{\O_\p}\Cohomology{1}{\BK}{K}{W}$ is odd, and since $\Cohomology{1}{\BK}{K}{W}[\p]\iso\Cohomology{1}{\BK}{K}{V_k}\iso k,$ we must have $\Cohomology{1}{\BK}{K}{W}_\div\iso F_\p/\O_\p.$
    
    For $w\mid p,$ we consider the diagram
    \begin{equation}
    \begin{tikzcd}
        \Cohomology{1}{\BK}{K}{V_k}\arrow{d}{\loc{w}}\arrow{r}{\sim}&\Cohomology{1}{\BK}{K}{W}_\div[\p]\arrow{d}{\loc{w}}\\
        \Cohomology{1}{\f}{K_w}{V_k}\arrow{r}{\sim}&\Cohomology{1}{\f}{K_w}{W}[\p]
    \end{tikzcd}
    \end{equation}
    Note that the leftmost vertical map is an isomorphism: it is injective as $\Cohomology{1}{\L_v^{\conj{c}}}{K}{V}=0,$ and both domain and codomain have dimension $1$ over $k$ by \eqref{f-is-one-dim}. This implies that the rightmost vertical map is also an isomorphism. Since $\Cohomology{1}{\f}{K_w}{W}$ is divisible, this implies both \eqref{corank-1} and \eqref{sur}.
    
    Let $P\in A(K)$ be the Heegner point defined in \cite[Equation 3.22]{Wei-Zhang}. Since we are assuming the BDP main conjecture for $g$ and we proved \eqref{corank-1} and \eqref{sur}, we can use \cite[Proposition 6.2.1]{JSW} together with the BDP formula of \cite{Brooks} as in \cite[Proposition 5.1.7]{JSW} to obtain
    \begin{equation}[bdp-form]
        2\cdot\nu_\p\left(\frac{1+p-a_p}{p}\cdot\log_{\omega_A}(P)\right)\le\nu_\p\left( \abs{\Cohomology{1}{\Fac}{K}{W}}\cdot C(W)\right),
    \end{equation}
    where $\log_{\omega_A}\colon A(K_v)_{/\tor}\otimes_{\Z_p}\O_\p\to\O_\p$ is $\O_\p$-linear and
    \begin{equation}
        C(W)=\abs{\Cohomology{0}{}{K_v}{W}}\abs{\Cohomology{0}{}{K_{\conj{v}}}{W}}\prod_{w\mid N^+}\abs{\Cohomology{1}{\mathrm{ur}}{K_w}{W}}.
    \end{equation}
    By \eqref{no-local-tor} and \eqref{ram}, we have $C(W)=1.$
    
    Now note that under $\Cohomology{1}{}{K}{A[\p]}\rightiso\Cohomology{1}{}{K}{W}[\p],$ the pre-image of $\Cohomology{1}{\Fac}{K}{W}[\p]$ is contained in $\Cohomology{1}{\L^{\conj{v}}_v}{K}{W}$: indeed, for any place $w$ of $K$ we have the diagram
    \begin{equation}
        \begin{tikzcd}
        0\arrow{r}&\Cohomology{1}{}{K}{A[\p]}\arrow{r}\arrow{d}{\loc{w}}&\Cohomology{1}{}{K}{W}[\p]\arrow{r}\arrow{d}{\loc{w}}&0\\
        A(K_w)[\p^\infty]\arrow{r}{\delta_w}&\Cohomology{1}{}{K_w}{A[\p]}\arrow{r}&\Cohomology{1}{}{K_w}{W}[\p]\arrow{r}&0
        \end{tikzcd}
    \end{equation}
    and the compatibility of the local conditions is clear from the diagram except for $w=v,$ but in this case \eqref{no-local-tor} implies that $A(K_v)[\p^\infty]=0.$ 
    
    Since we are assuming $\Cohomology{1}{\L^{\conj{v}}_v}{K}{W}=0,$ the above implies that $\Cohomology{1}{\Fac}{K}{W}=0.$ Together with \eqref{not-anom}, \eqref{bdp-form} then becomes
    \begin{equation}
        \nu_\p\left(\frac{\log_{\omega_A}(P)}{p}\right)\le0.
    \end{equation}
    But in fact, \eqref{not-anom} and \eqref{good} imply that $\log_{\omega_A}$ take values in $p\O_\p$ (see \cite[Section 3.5]{JSW}), and so the above equation implies that $P$ is not $p$-divisible in $A(K_v)_{/\tor}\otimes_{\Z_p}\O_\p.$ Since $c(1,1)$ is the image of $P$ under the Kummer map, this let us conclude that $c(1,1)\neq0.$
\end{proof}

\begin{proof}[Proof of \cref{Theorem-A}]
    Let $m\in\Admissible_g^+$ be as in \cref{reduction-rank-1}. Now choose $q_1,q_2$ as in \cref{reduction-bdp} for $g_m.$ Note that $g_{mq_1q_2}$ has level $Nmq_1q_2$ and that $(Nmq_1q_2)^-=N^-mq_1q_2>1,$ and in particular satisfies \eqref{Heegner}.
    
    This means that the BDP main conjecture for $g_{mq_1q_2}$ is true by our assumptions. So by \cref{proof-bdp} applied to $g_{mq_1q_2},$ we conclude that $c(1,mq_1q_2)\neq0.$ This suffices to conclude that $c(1,m)\neq0$ by the last part of \cref{reduction-bdp}, and hence that $\kappa_g\neq\{0\}$ by the last part of \cref{reduction-rank-1}.
\end{proof}

\newpage
\begingroup
\setstretch{2}
\bibliographystyle{alpha}
\bibliography{references}

\begin{thebibliography}{C{\relax\c{C}}SS18}

\bibitem[BCK19]{BCK}
Ashay Burungale, Francesc Castella, and Chan-Ho Kim.
\newblock A proof of {P}errin-{R}iou's {H}eegner point main conjecture, 2019.
\newblock arXiv:1908.09512.

\bibitem[BD05]{Bertolini-Darmon}
M.~Bertolini and H.~Darmon.
\newblock Iwasawa's main conjecture for elliptic curves over anticyclotomic
  {$\mathbb{Z}_p$}-extensions.
\newblock {\em Ann. of Math. (2)}, 162(1):1--64, 2005.

\bibitem[BL19]{Buyukboduk-Lei}
Kâzım Büyükboduk and Antonio Lei.
\newblock {Iwasawa Theory of Elliptic Modular Forms Over Imaginary Quadratic
  Fields at Non-ordinary Primes}.
\newblock {\em International Mathematics Research Notices}, 07 2019.

\bibitem[Car94]{Carayol}
Henri Carayol.
\newblock Formes modulaires et repr\'esentations galoisiennes \`a valeurs dans
  un anneau local complet.
\newblock In {\em {$p$}-adic monodromy and the {B}irch and {S}winnerton-{D}yer
  conjecture ({B}oston, {MA}, 1991)}, volume 165 of {\em Contemp. Math.}, pages
  213--237. Amer. Math. Soc., Providence, RI, 1994.

\bibitem[C{\relax\c{C}}SS18]{CCSS}
Francesc Castella, Mirela {\relax\c{C}}iperiani, Christopher Skinner, and
  Florian Sprung.
\newblock On the {I}wasawa main conjectures for modular forms at non-ordinary
  primes, April 2018.
\newblock arxiv:1804.10993.

\bibitem[DDT94]{Fermat's-last}
Henri Darmon, Fred Diamond, and Richard Taylor.
\newblock Fermat's last theorem.
\newblock In {\em Current developments in mathematics, 1995 ({C}ambridge,
  {MA})}, pages 1--154. Int. Press, Cambridge, MA, 1994.

\bibitem[DI08]{Darmon-Iovita}
Henri Darmon and Adrian Iovita.
\newblock The anticyclotomic main conjecture for elliptic curves at
  supersingular primes.
\newblock {\em J. Inst. Math. Jussieu}, 7(2):291--325, 2008.

\bibitem[DT94a]{DT2}
Fred Diamond and Richard Taylor.
\newblock Lifting modular mod {$l$} representations.
\newblock {\em Duke Math. J.}, 74(2):253--269, 1994.

\bibitem[DT94b]{DT}
Fred Diamond and Richard Taylor.
\newblock Nonoptimal levels of mod {$l$} modular representations.
\newblock {\em Invent. Math.}, 115(3):435--462, 1994.

\bibitem[GP12]{Gross-Parson}
Benedict~H. Gross and James~A. Parson.
\newblock On the local divisibility of {H}eegner points.
\newblock In {\em Number theory, analysis and geometry}, pages 215--241.
  Springer, New York, 2012.

\bibitem[Gre99]{Greenberg}
Ralph Greenberg.
\newblock Iwasawa theory for elliptic curves.
\newblock In {\em Arithmetic theory of elliptic curves ({C}etraro, 1997)},
  volume 1716 of {\em Lecture Notes in Math.}, pages 51--144. Springer, Berlin,
  1999.

\bibitem[HB15]{Brooks}
Ernest Hunter~Brooks.
\newblock Shimura curves and special values of {$p$}-adic {$L$}-functions.
\newblock {\em Int. Math. Res. Not. IMRN}, (12):4177--4241, 2015.

\bibitem[Hid00]{Hida}
Haruzo Hida.
\newblock {\em Modular forms and {G}alois cohomology}, volume~69 of {\em
  Cambridge Studies in Advanced Mathematics}.
\newblock Cambridge University Press, Cambridge, 2000.

\bibitem[HL19]{Hatley-Lei}
Jeffrey Hatley and Antonio Lei.
\newblock Arithmetic properties of signed selmer groups at non-ordinary primes.
\newblock {\em Annales de l'Institut Fourier}, 69(3):1259--1294, 2019.

\bibitem[How04]{Howard2}
Benjamin Howard.
\newblock Iwasawa theory of {H}eegner points on abelian varieties of {$\rm
  GL_2$} type.
\newblock {\em Duke Math. J.}, 124(1):1--45, 2004.

\bibitem[JSW17]{JSW}
Dimitar Jetchev, Christopher Skinner, and Xin Wan.
\newblock The {B}irch and {S}winnerton-{D}yer formula for elliptic curves of
  analytic rank one.
\newblock {\em Camb. J. Math.}, 5(3):369--434, 2017.

\bibitem[MR04]{Mazur-Rubin}
Barry Mazur and Karl Rubin.
\newblock Kolyvagin systems.
\newblock {\em Mem. Amer. Math. Soc.}, 168(799):viii+96, 2004.

\bibitem[Nek01]{Nekovar}
Jan Nekov\'{a}\v{r}.
\newblock On the parity of ranks of {S}elmer groups. {II}.
\newblock {\em C. R. Acad. Sci. Paris S\'{e}r. I Math.}, 332(2):99--104, 2001.

\bibitem[Nek13]{Nekovar2}
Jan Nekov\'{a}\v{r}.
\newblock Some consequences of a formula of {M}azur and {R}ubin for arithmetic
  local constants.
\newblock {\em Algebra Number Theory}, 7(5):1101--1120, 2013.

\bibitem[SU14]{Skinner-Urban}
Christopher Skinner and Eric Urban.
\newblock The {I}wasawa main conjectures for {$\rm GL_2$}.
\newblock {\em Invent. Math.}, 195(1):1--277, 2014.

\bibitem[Wan15]{Haining}
Haining Wang.
\newblock {\em Anticyclotomic {I}wasawa theory for {H}ilbert modular forms}.
\newblock ProQuest LLC, Ann Arbor, MI, 2015.
\newblock Thesis (Ph.D.)--The Pennsylvania State University.

\bibitem[Zha14]{Wei-Zhang}
Wei Zhang.
\newblock Selmer groups and the indivisibility of {H}eegner points.
\newblock {\em Camb. J. Math.}, 2(2):191--253, 2014.

\end{thebibliography}
\endgroup

\end{document}